\newtheorem{theorem}{Theorem}[section]
\newtheorem{lemma}[theorem]{Lemma}
\newtheorem{algorithm}[theorem]{Algorithm}
\theoremstyle{definition}
\newtheorem{defn}[theorem]{Definition}
\newtheorem{remark}[theorem]{Remark}
\newtheorem{example}[theorem]{Example}
\newtheorem{proposition}[theorem]{Proposition}
\newcommand{\FF}{\mathbb{F}}
\newcommand{\QQ}{\mathbb{Q}}
\newcommand{\ZZ}{\mathbb{Z}}
\newcommand{\HH}{\mathbb{H}}
\newcommand{\CC}{\mathbb{C}}
\newcommand{\PP}{\mathbf{P}}
\newcommand{\calM}{\mathcal{M}}
\newcommand{\calS}{\mathcal{S}}
\newcommand{\slz}{\SL_2(\ZZ)}
\DeclareMathOperator{\Div}{Div}
\DeclareMathOperator{\GL}{GL}
\DeclareMathOperator{\SL}{SL}
\DeclareMathOperator{\Gal}{Gal}
\DeclareMathOperator{\ns}{ns}
\newcommand{\Magma}{\textsc{Magma}{}}
\newcommand{\SageMath}{\textsc{SageMath}}
\newcommand{\arXiv}[3]{\href{https://arxiv.org/abs/#1}{arXiv:#1v#2} (#3)}
\newenvironment{psmallmatrix}
  {\left(\begin{smallmatrix}}
  {\end{smallmatrix}\right)}
\begin{document}
\title{Coleman Integration on Modular Curves}
\author{Mingjie Chen, Kiran S. Kedlaya, and Jun Bo Lau}
\thanks{Kedlaya was supported by NSF (grants DMS-2053473 and prior), Simons Foundation (Simons Fellowship in Mathematics 2023--2024), and UC San Diego (Warschawski professorship).}

\date{January 2024}

\begin{abstract}
Coleman integrals is a major tool in the explicit arithmetic of algebraic varieties, notably in the study of rational points on curves. One of the inputs to compute Coleman integrals is the availability of an affine model. We develop a model-free algorithm that computes single Coleman integrals between any two points on modular curves. Using Hecke operators, any Coleman integral can be broken down into a sum of tiny integrals. We illustrate this using several examples computed in \SageMath{} and \Magma. We also suggest some future directions for this work, including a possible extension to iterated Coleman integrals.
\end{abstract}

\maketitle


\section{Introduction}\label{section1}

In the 1980s, Coleman developed a theory of $p$-adic line integrals on curves
and higher dimensional varieties with good reduction at $p$
\cite{coleman-de-shalit, coleman-ann}. This has found many applications, e.g., computing torsion points on curves \cite{coleman-ann}, $p$-adic regulators \cite{coleman-de-shalit} and heights \cite{coleman-gross}, and rational points on curves via the \emph{Chabauty--Coleman method} \cite{coleman-chabauty}. 

In the 2010s, it became feasible to make numerical computations of Coleman integrals on individual curves via Dwork’s principle
of analytic continuation along Frobenius. Building on Kedlaya's algorithm for computing the action of Frobenius on the Monsky--Washnitzer cohomology of odd degree hyperelliptc curves \cite{kedlaya-mw},
a Coleman integration algorithm for such curves was introduced by Balakrishnan--Bradshaw--Kedlaya \cite{bbk10}. This was subsequently generalized to arbitrary curves by Balakrishnan--Tuitman \cite{bt-coleman}; it should be emphasized that this algorithm requires an input an explicit projective representation of the target curve (namely, a singular plane model).

In this paper, we specialize the general theory of Coleman integration to the case of \emph{modular curves}. A modular curve is associated to a \emph{congruence subgroup} $H$ of $\GL_2(\ZZ)$, namely one containing the kernel of $\GL_2(\ZZ) \to \GL_2(\ZZ/N\ZZ)$ for some positive integer $N$; for any number field $K$,
the $K$-rational points of the associated modular curve $X_H$ (roughly) classify elliptic curves over $K$ such that the Galois action on the $N$-torsion points of $E(\overline{K})$ factors through $H$.
Consequently, classifying $K$-rational points on modular curves plays an important role in determining 
the possible images of the adelic Galois representations associated to elliptic curves over  $K$, as in Serre's uniformity conjecture \cite{serre-unif-conj, sutherland-Galois-image}
or Mazur's ``Program B'' \cite{rouse-sutherland-zureick-brown}.

Unfortunately, the use of the Chabauty--Coleman method for modular curves is hampered by the fact that expressing a modular curve via a singular plane model is a highly inefficient representation: the size of the coefficients in such a model grow rapidly with the level. The purpose of this paper is to introduce some techniques which can be used to perform Coleman integrals on modular curves without instantiating a projective representation.
We demonstrate our methods by comparing them to the Balakrishnan--Tuitman implementation for three classes of modular curves $X_0(N)$, $X_0^+(N)$, and $X_{ns}^+(N)$ in \S~\ref{section4}.

To say more, let us follow \cite{bbk10} and \cite{bt-coleman} by dividing the problem of computing a path integral $\int_P^Q \omega$ into two steps.
\begin{enumerate}
    \item Computing integrals of the form $\int_P^{P'} \omega$ where the endpoints are constrained to lie within the same residue disc. Following \cite{bbk10} we call these \emph{tiny integrals}.
    \item Computing integrals of the form $\int_{P}^{Q} \omega$ for some points $P,Q$ lying in prescribed distinct residue discs. We call these  \emph{large integrals}. (We are free to replace $P$ and $Q$ within their residue discs for the sake of convenience, at the expense of creating new tiny integrals.)
\end{enumerate}

Typically, one handles tiny integrals by directly expanding in power series and integrating term by term, and large integrals by applying the change of variables formula for an analytic Frobenius lift; this has the effect of constructing a linear system whose solutions compute the large integrals simultaneously for all differentials representing a basis of de Rham cohomology.

For tiny integrals on modular curves, we use a uniformizer derived from the $j$-invariant to expand in power series around an algebraic point on the curve. As the resulting series coefficients are algebraic numbers, we can recover them by computing over $\CC$ instead of $\overline{\QQ}_p$, which means we can use the complex-analytic description of the modular curve as a quotient of the upper half plane.

For large integrals on modular curves, we replace the Frobenius lift with the $p$-th Hecke operator $T_p$, which has the advantage of being defined \emph{algebraically} on the whole curve.

We conclude by expressing the hope that our present methods (or some improvements thereof) can be adapted to \emph{iterated} Coleman integrals, such as those appearing in the \emph{quadratic Chabauty} method \cite{BD1, BD2}. This is of particular concern because while many modular curves do not obey the bound on the Mordell--Weil rank required for Chabauty--Coleman to apply (e.g., in the nonsplit Cartan case), they do almost always obey the rank bound for quadratic Chabauty \cite{qc-bound}.
Thus a model-free implementation of iterated Coleman integration for modular curves would make it possible to extend the work of Balakrishnan, Dogra, M\"{u}ller, Tuitman and Vonk \cite{qc-mod-curves, cursed-curve} applying quadratic Chabauty to solve the rational points problem on some low-genus nonsplit Cartan modular curves. We discuss some issues around model-free iterated Coleman integrals on modular curves at the end of the paper (\S\ref{section6}).


%

\section{Preliminaries and Notations}\label{section2}

Throughout this paper, we write $\HH$ to be the complex upper half plane,
$\HH^+ := \HH \cup \mathbb{P}^1(\QQ)$, and $\GL_2(\QQ)^+ := \{ \alpha \in \GL_2(\QQ) : \det(\alpha) > 0 \}$. For an integer $N \geq 1$, let $H$ be
any subgroup of $\GL_2(\ZZ/N\ZZ)$ satisfying:

\begin{itemize}\label{H_subgroup}
\item $-I \in H$;
  \item The determinant map $\det: H \rightarrow (\ZZ/N\ZZ)^\times$ is
    surjective.
  \end{itemize}

  These ensure that the resulting modular curves in consideration are smooth, irreducible, geometrically irreducible and defined over $\QQ$. We use the standard notation for well-known congruence subgroups $\Gamma(N), \Gamma_1(N), \Gamma_0(N)$ and $\Gamma_H(N) := \{ \begin{psmallmatrix} a & b \\ c & d \end{psmallmatrix} \in
  \SL_2(\ZZ) : \begin{psmallmatrix} a & b \\ c & d \end{psmallmatrix} \pmod{N} \in H\}$.
  
  \subsection{Modular curves and modular forms}
For any congruence subgroup $\Gamma \subseteq \SL_2(\ZZ)$, we write $Y(\Gamma) :=
\Gamma \backslash \HH$ to be the set of orbits under the fractional linear
transformation and $X(\Gamma):= \Gamma \backslash \HH^+$ to be the
compactification of $Y(\Gamma)$ by adjoining cusps. If $\Gamma = \Gamma(N),
\Gamma_0(N), \Gamma_1(N)$ or $\Gamma_H(N)$, we write its corresponding modular
curve $X(\Gamma)$ as $X(N),X_0(N), X_1(N)$ or $X_H(N)$, and if the level $N$ is clear, we
will write it as $X_*$ as appropriate.

The modular curve $X(\Gamma)$ is a connected compact Riemann surface, i.e., an algebraic curve,
and it has a moduli interpretation: the noncuspidal $\overline{\QQ}$-points on $X(\Gamma)$ correspond to isomorphism classes of pairs $(E,\phi)$ where $E$ is an elliptic curve defined over $\overline{\QQ}$ and $\phi$ is an isomorphism between the $N$-torsion points $E[N]$ and the group $\ZZ/N\ZZ\times \ZZ/N\ZZ$. Here, two pairs $(E_1,\phi_1)$, $(E_2,\phi_2)$ are isomorphic if there is an isomorphism $\psi$ of elliptic curves and some invertible matrix $M \in \Gamma$ such that the following diagram commutes:
\begin{center}
  \begin{tikzcd}
    E_1[N] \arrow[r, "\phi_1"] \arrow[d, "\psi"] & (\ZZ/N\ZZ)^2 \arrow[d,  "M"] \\
    E_2[N] \arrow[r, "\phi_2"]                &  (\ZZ/N\ZZ)^2.                               
  \end{tikzcd}
\end{center}

The absolute Galois group $\Gal (\overline{\QQ}/\QQ)$ acts on the non-cuspidal $\overline{\QQ}$-points on $X_\Gamma$ by $\sigma\cdot (E,\phi) := (E^\sigma,\phi \circ \sigma^{-1}),$ where $\sigma\in \Gal(\overline{\QQ}/\QQ)$ We say that a point $(E,\phi)$ is $\QQ$-rational if it is invariant under the
$\Gal(\overline{\QQ} / \QQ)$-action. A necessary condition for
$(E,\phi)$ to be $\QQ$-rational is that $E$ is defined over $\QQ$.

We denote by $\calM_k(\Gamma)$ (resp. $\calS_k(\Gamma)$) the space of weight
$k$ modular forms (resp. cusp forms) with respect to $\Gamma$. There is a canonical isomorphism of $\CC$-vector spaces between $\calS_2(\Gamma)$ and the space of
degree $1$ holomorphic differentials on $X(\Gamma)$.

\subsection{Hecke operators}
We begin with the definition of Hecke operators as double coset operators on the space of modular forms.

\begin{defn}
Let $\Gamma_1$,$\Gamma_2$ be two congruence subgroups of $\slz$, $\alpha \in \GL_2(\mathbb{Q})^+.$ The double coset operator is defined as:

\begin{align*}
    [\Gamma_1\alpha\Gamma_2]_k : \mathcal{M}_k(\Gamma_1) &\rightarrow \mathcal{M}_k(\Gamma_2) \\
    f &\mapsto f[\Gamma_1\alpha\Gamma_2]_k := \sum f[\beta_i]_k,
\end{align*}

\noindent
where \{$\beta_i$\} is the finite set of double coset representatives of $\Gamma_1\backslash \Gamma_1\alpha\Gamma_2$ and $f[\alpha]_k (\tau) = \det(\alpha)^{k-1} (c \tau + d)^{-k} f(\alpha \cdot \tau)$ is the slash-$k$ operator.
\end{defn}


Let $H$ be a subgroup of $\GL_2(\ZZ/N\ZZ)$ and $\Gamma_H \subseteq \slz$ be the lift of $H$ to $\slz$. Let $p$ be a prime  not dividing $N$. Assaf gives an adelic description of the Hecke operator $T_p$ as a correspondence \cite{assaf}. They also relate this description to double cosets, which we use as a definition of $T_p$, the Hecke operator at $p$.

\begin{defn}
Let $p \not| N$ be a prime and $\alpha \in M_2(\ZZ)$ be such that $\det(\alpha) = p$ and $\alpha \pmod{N} \in H$. The Hecke operator at $p$ is defined to be the double coset operator $T_p := [\Gamma_H\alpha\Gamma_H]$. This does not depend on the choice of $\alpha$.
\end{defn}

On the other hand, the double coset operators induce a map of divisor groups of modular curves after $\ZZ$-linear extension. When specialized to Hecke operators, they can be represented as
\[T_p(\Gamma_H\tau) := \sum_i \Gamma_H \beta_i(\tau),\]
\noindent where $\{\beta_i\}$ are coset representatives $\Gamma_H\backslash \Gamma_H\alpha\Gamma_H.$

\begin{example}\label{eg:hecke_N0}
Consider the modular curve $X_0(N) := \Gamma_0(N) \backslash \HH^+$. For $p$ not dividing $N$, the Hecke operator $T_p$ can be defined using $\alpha =  \begin{psmallmatrix}
  1 & 0\\
  0 & p
\end{psmallmatrix}$. The coset representatives are given by 
$\{ 
\begin{psmallmatrix}
  1 & i\\
  0 & p
\end{psmallmatrix}: i \in \{0, \ldots, p-1 \} \}\cup
\{\begin{psmallmatrix}
  p & 0\\
  0 & 1
\end{psmallmatrix}
\}$. Recall that a point on the modular curve $X_0(N)$  is a pair $(E,C)$, where $E$ is an elliptic curve and $C$ is a cyclic subgroup of order $N$. The algebraic description of the Hecke operator at $p$ acting on point is given by: \[T_p(E,C) = \sum_{D \subseteq E[p],\mbox{ $|D| = p$}} (E/D, (C + D)/D).\]
\end{example}

More generally, for any subgroup $H \subseteq \GL_2(\ZZ/N\ZZ)$ and $p$ coprime to $N$, we consider the fiber product $X_H(p,N) := X_0(p) \times_{X(1)} X_H(N)$. There are two degeneracy maps $\alpha,\beta\colon X_H(p,N) \rightarrow X_H(N)$ defining the Hecke operator at $p$: one map corresponds to forgetting the cyclic group of order $p$, while the other corresponds to replacing $E$ with its quotient by the cyclic group of order $p$. By Picard functoriality, for a point $(E,\phi) \in X_H$ where the level structure $\phi$ is determined by $H$, we have an algebraic description of the Hecke operator at $p$: \[T_p(E,\phi) := \alpha^* \beta_* (E,\phi) = \sum_{f:E\rightarrow E', \deg(f) = p} (E',f(\phi)).\]

\subsection{Coleman integrals}\label{sec:coleman_integration}

Coleman's construction of $p$-adic line integrals share many properties as their complex-analytic counterparts. Below we record some properties of Coleman integrals.

\begin{theorem}{\cite{coleman-de-shalit,coleman-ann}} \label{coleman_def}
Let $X/\QQ_p$ be a smooth, projective, and geometrically irreducible curve with good reduction at $p$. Let $\mathfrak{X}/\ZZ_p$ be the smooth model of $X$. 
Then there is a unique way to assign a $p$-adic integral 
\[ \int_P^Q \omega \in \overline{\QQ}_p\]
for every choice of a wide open subspace $U$ of $X^{\mathrm{an}}$, two points $P,Q \in U(\overline{\QQ}_p)$,  and a $1$-form $\omega \in H^0(U,\Omega^1)$ satisfying:
\begin{enumerate}
    \item The integral is $\overline{\QQ}_p$-linear in $\omega$.
    \item We have the following commutativity property:
    \begin{equation*}
        \int_P^Q \omega + \int_{P'}^{Q'} \omega = \int_P^{Q'} \omega + \int_{P'}^Q \omega
    \end{equation*} 
    and so can define $\int_D \omega$ unambiguously for $D \in \Div^0_U(\overline{\QQ}_p)$
    so that
    \begin{equation*}
        \int_{D+D'} \omega = \int_D \omega + \int_{D'} \omega, \qquad \int_D \omega = \int_P^Q \omega \mbox{ for $D = (Q)-(P)$.}
    \end{equation*}
    In particular we have additivity in the endpoints:
        \begin{equation*}
           \int_P^Q \omega = \int_P^R \omega + \int_R^Q \omega.
        \end{equation*}
    
    \item For $P$ and $Q$ in the same residue disc, $\int_P^Q \omega$ can be computed in terms of power series in some uniformizer by formal term-by-term integration (see below).
        \item The integral is compatible with the action of $\Gal(\overline{\QQ}_p/\QQ_p)$. In particular, if $P,Q \in X(\QQ_p)$, then $\int_P^Q \omega \in \QQ_p$.
        
        \item Let $P_0 \in X(\overline{\QQ}_p)$ be fixed and $\omega \in H^0(X, \Omega^1)$ be nonzero. Then for any $x \in \mathfrak{X}(\overline{\FF}_p)$, the set of $P \in X(\overline{\QQ}_p)$ reducing to $x$ such that $\int_{P_0}^P \omega = 0$ is finite.
        
        \item The change of variables formula holds: if $U \subseteq X, V \subseteq Y $ are wide open subspaces of the curves $X,Y$, $\omega$ is a $1$-form on $V$, and $\phi\colon U \rightarrow V$ is a rigid analytic map, then   
        \begin{equation*}
            \int_P^Q \phi^* \omega = \int_{\phi(P)}^{\phi(Q)} \omega.
        \end{equation*}
        In particular, restriction of $\omega$ from $U$ to a wide open subspace does not change $\int_P^Q \omega$.
        
        \item The Fundamental Theorem of Calculus holds: for any rigid analytic function $f \in H^0(U, \mathcal{O})$, $\int_P^Q df = f(Q) - f(P)$.
\end{enumerate}
\end{theorem}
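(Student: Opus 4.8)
\emph{Strategy.} The statement is Coleman's, so my plan is to recall his construction by analytic continuation along Frobenius and read off the seven properties from it. Good reduction enters in exactly one place, a decisive one. Because $\mathfrak{X}/\ZZ_p$ is smooth, the $p$-power Frobenius of $\mathfrak{X}\otimes\overline{\FF}_p$ lifts (noncanonically) to a rigid analytic endomorphism $\phi$ of a basic wide open subspace of $X^{\mathrm{an}}$, and the induced map $\phi^*$ on $H^1_{\mathrm{dR}}(X/\QQ_p)$ is canonical; by the Hasse--Weil bound for $\mathfrak{X}\otimes\overline{\FF}_p$ its eigenvalues are algebraic numbers of absolute value $p^{1/2}$, so in particular $\phi^*-1$ is invertible on $H^1_{\mathrm{dR}}$.

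\emph{Construction of the primitive.} Fix a basis $\omega_1,\dots,\omega_{2g}$ of $H^1_{\mathrm{dR}}(X/\QQ_p)$ by forms of the second kind and write $\phi^*\omega_i=\sum_j M_{ji}\,\omega_j+df_i$, with $M$ the Frobenius matrix and $f_i$ rigid analytic. One seeks locally analytic functions $F_1,\dots,F_{2g}$, unique up to additive constants, with $dF_i=\omega_i$ and the functional equation $\phi^*F_i=\sum_j M_{ji}F_j+f_i$; invertibility of $M-I$ is exactly what pins down the remaining constants. Existence is Dwork's principle: on each residue disc $F_i$ is forced to be the term-by-term power-series antiderivative of $\omega_i$ with a single undetermined constant, and the functional equation propagates these constants along the finite, eventually contracting Frobenius orbits, the contraction on $H^1_{\mathrm{dR}}$ ensuring convergence. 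For a general $\omega\in H^0(U,\Omega^1)$ on a wide open $U$, decompose $\omega$ modulo exact forms into a combination of the $\omega_i$ plus a part with nonzero residues along the ends of $U$; the latter contributes branches of the $p$-adic logarithm, and one sets $F_\omega$ to be the matching combination of the $F_i$, a rigid primitive of the exact part, and these logarithmic terms, and defines $\int_P^Q\omega:=F_\omega(Q)-F_\omega(P)$.

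\emph{The properties.} Linearity (1) is linearity of $\omega\mapsto F_\omega$; the commutativity square and endpoint additivity (2) are telescoping identities for $F_\omega(Q)-F_\omega(P)$, which also make $\int_D\omega$ well defined on $\Div^0$. Property (3) holds by the very construction of $F_\omega$ on a residue disc, the functional equation supplying the constant of integration coherently. For (4), every datum --- $\phi$ (choosable $\Gal(\overline{\QQ}_p/\QQ_p)$-equivariantly), the class of $\phi^*$, $M$, the $f_i$ --- is defined over $\QQ_p$ and $F_\omega$ is singled out by a Galois-stable system of conditions, so $\sigma F_\omega=F_{\sigma\omega}$. For change of variables (6), $\phi^*F_\omega$ is a locally analytic primitive of $\phi^*\omega$ obeying the functional equation for compatibly chosen Frobenius lifts, hence agrees with $F_{\phi^*\omega}$ up to a constant that cancels in the difference; restriction to a wide open subspace is the case of an open immersion. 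Property (7) is the case $\omega=df$ with $f$ rigid, where $f$ is itself a primitive with zero cohomology class. Finally, for (5), restrict $F_\omega$ to the residue disc of $x$: since the nonzero $\omega\in H^0(X,\Omega^1)$ cannot vanish on an open set by the identity principle, $P\mapsto\int_{P_0}^P\omega$ is there a nonzero convergent power series in a local parameter, and Coleman's refinement of Strassman's theorem --- which bounds the zeros via the Newton polygon and underlies the effective Chabauty bound --- gives finiteness.

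\emph{The main obstacle.} The real work is the construction of the primitive: proving that the disc-by-disc recipe for the $F_i$ glues to a single locally analytic function on the curve and that the Frobenius functional equation is uniquely solvable. This needs both the Hasse--Weil bound, to invert $M-I$, and a convergence estimate controlling how the radius of a power series changes under $\phi^*$ (which contracts residue discs), so that iterating Frobenius preserves analyticity. Passing from forms of the second kind to arbitrary $\omega$ on a wide open, with the attendant logarithmic branches, is a further layer but is logically independent of this analytic core.
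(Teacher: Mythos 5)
The paper does not prove this theorem at all: it is background, quoted with attribution to Coleman's papers, and the text that follows only uses the listed properties. So the only meaningful comparison is with Coleman's original construction, and your outline is essentially that construction (Frobenius lift on a wide open, invertibility of $\phi^*-1$ on $H^1_{\mathrm{dR}}$ via the Weil bounds, primitives $F_i$ pinned down by the functional equation, Dwork's analytic continuation along Frobenius, logarithmic terms for residues at the ends). Properties (1), (2), (3), (7) do follow from that construction as you say, and (4) and (6) are at least correctly located (canonicity/independence of the Frobenius lift is what makes Galois equivariance and functoriality work; note that for (6) an arbitrary rigid map $\phi\colon U\to V$ will not literally intertwine chosen Frobenius lifts, so the honest argument runs through the uniqueness characterization of the integral rather than through ``compatibly chosen'' lifts). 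One should also say explicitly that uniqueness for general $\omega\in H^0(U,\Omega^1)$ presupposes a fixed branch of the $p$-adic logarithm; you gesture at the log terms but do not flag the choice.

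The concrete gap is your argument for property (5). Strassman's theorem (and its Newton-polygon refinements) bounds zeros of a power series with coefficients tending to $0$ on a closed disc; the Coleman primitive of a holomorphic form on a residue disc has coefficients $a_i/(i+1)$, which are unbounded, and on the \emph{open} disc of points of $\overline{\QQ}_p$ reducing to $x$ such a series can vanish infinitely often. Indeed, for an elliptic curve with good reduction, $\omega$ the invariant differential and $P_0$ the origin, the integral is the formal group logarithm, whose zero set in the residue disc of the origin is the full $p$-power torsion of the formal group --- infinite over $\overline{\QQ}_p$. So the step ``nonzero convergent series, hence finitely many zeros'' fails as stated; finiteness in Coleman's theorems is obtained only after restricting to points of bounded ramification (e.g.\ $X(K)$ for a fixed finite extension $K/\QQ_p$, where Strassman-type arguments do apply), and your sketch would need to incorporate that restriction (which is also how the property is actually used later in the paper, where endpoints lie in $X(\QQ_p)$).
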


\begin{remark} \label{remark:change of variables for correspondences}
    Thanks to additivity in the endpoints, we may upgrade the change of variables formula to apply to correspondences of curves, not just maps. We will apply this to the Hecke operator $T_p$ for modular curves in \S\ref{sec:colemantotiny}.
\end{remark}

\begin{defn}\label{def:tiny_integral}
The Coleman integral $\int_P^Q \omega$ is called a \emph{tiny integral} if $P$ and $Q$ reduce to the same point in $\mathfrak{X}(\overline{\FF}_p)$, i.e., they lie in the same residue disc.

To compute a tiny integral $\int_P^Q \omega$, we may first express $\omega$ as a power series in terms of a uniformizer at some point in the residue disc of $P$ and $Q$, then formally integrate the power series and evaluate at the endpoints: \[\int_P^Q \omega = \int_{u(P)}^{u(Q)} \omega(u) = \int_{u(P)}^{u(Q)}\sum_{i=0}^\infty a_i u^i du= \sum_{i=0}^\infty \frac{a_i}{i+1} (u(Q)^{i+1} - u(P)^{i+1}).\]
\end{defn}

Coleman’s construction is suitable for explicit computation. In \cite{bbk10}, the authors demonstrated an algorithm to compute single Coleman integrals for hyperelliptic curves. Their method is based on Kedlaya's algorithm for computing the Frobenius action on the de Rham cohomology of hyperelliptic curves \cite{kedlaya-mw}; this approach has been generalized to arbitrary smooth curves \cite{bt-coleman, tuit16, tuit17}. Current implementations of this approach require nice affine plane models for the curves as inputs. Since modular curves tend to have large gonality \cite{large-gonality}, such models are not readily available and are often bottlenecks in existing algorithms.
\section{Coleman Integration on Modular Curves}\label{section3}

In this section, we introduce an algorithm that computes single Coleman integrals of everywhere holomorphic differentials between any two points on modular curves. The modular curves in consideration are of the form $X_H(N)$, where $H \subseteq \GL_2(\ZZ/N\ZZ)$ satisfies the conditions from the earlier section, or $X_H^+(N)$, where we quotient the modular curves by an Atkin-Lehner involution. An innovation is that the algorithm does not require affine models of the modular curves, which are often required in known algorithms.

Given a modular curve $X$ of genus $g$ over $\QQ$ corresponding to the congruence subgroup $\Gamma$, a holomorphic differential $1$-form $\omega$, and two $p$-adic points $Q,R \in X(\QQ_p)$, our algorithm to compute $\int_R^Q \omega$ consists of the following major steps:

\begin{enumerate}

\item (Reduction) Express $\int_R^Q \omega$ in terms of the Hecke action and tiny integrals.

\item (Basis and uniformizer) Find a basis of holomorphic $1$-forms and a suitable uniformizer.

\item (Hecke action) Compute the action of Hecke operators on the space of cusp forms and divisors.

\item (Power series expansion) Express the $1$-forms as a power series expansion in the chosen uniformizer.

\item (Evaluation) Formally integrate the tiny integrals and evaluate at the endpoints.
  \end{enumerate}

\subsection{Coleman integrals as sums of tiny integrals}\label{sec:colemantotiny}
Let $p$ be a prime of good reduction on $X$. Let $Q,\,R$ be two points in $X(\QQ_p)$ and choose a $\QQ$-basis $\{\omega_1,\ldots,\omega_g\}$ of $H^0(X,\Omega^1)$.  The Hecke operator $T_p$ acts on $\mathcal{S}_2(\Gamma)$, corresponding to the holomorphic $1$-forms. Denote by $A$ the Hecke matrix with respect to a basis of $\mathcal{S}_2(\Gamma)$. By linearity, we have:

\[
T_p^*\begin{pmatrix} \omega_1 \\\vdots \\ \omega_g \end{pmatrix}  = A\begin{pmatrix} \omega_1 \\\vdots \\ \omega_g \end{pmatrix} \implies \begin{pmatrix} \int^Q_RT_p^*\omega_1 \\\vdots \\ \int^Q_RT_p^*\omega_g \end{pmatrix}  = A\begin{pmatrix} \int^Q_R\omega_1 \\\vdots \\ \int^Q_R\omega_g \end{pmatrix}.
\]

For any $\omega \in H^0(X,\Omega^1)$, the functoriality property of Coleman integrals and the definition of Hecke operators as correspondences imply that  
\[\int^Q_R T_p^*(\omega) = \int^{T_p(Q)}_{T_p(R)} \omega = \sum_{i=0}^{p} \int^{Q_i}_{R_i} \omega,\] where $T_p(Q) = \sum_{i=0}^p Q_i$ and  $T_p(R) = \sum_{i=0}^p R_i$. The summands are elliptic curves over $\overline{\QQ}_p$ that are $p$-isogenous to $Q$ and $R$ respectively.

By considering the expression $(p+1)\int_R^Q \omega - \int_R^Q T_p^* \omega$, we obtain the following system of equations:

\begin{equation}\label{eq:fundamental-eqn}
    ((p+1)I-A)\begin{pmatrix} \int^Q_R\omega_1 \\\vdots \\ \int^Q_R\omega_g \end{pmatrix} = \begin{pmatrix} \sum_{i=0}^{p}\int^Q_{Q_i} \omega_1 - \sum_{i=0}^{p}\int^R_{R_i} \omega_1 \\\vdots \\ \sum_{i=0}^{p}\int^Q_{Q_i} \omega_g - \sum_{i=0}^{p}\int^R_{R_i} \omega_g \end{pmatrix}.
\end{equation}

\begin{proposition} In Equation \eqref{eq:fundamental-eqn}, we have the following:

\begin{enumerate}
\item Each $Q_i$ (resp. $R_i$) is in the same residue disc as $Q$ (resp. $R$).
    \item The matrix $(p+1)I - A$ is invertible.
\end{enumerate}
\end{proposition}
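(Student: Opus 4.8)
Both parts reduce to comparing $T_p$ with Frobenius in characteristic $p$. For part (1), the plan is to specialize modulo $p$ and apply the Eichler--Shimura congruence relation (equivalently, the Deligne--Rapoport description of the Hecke correspondence at a good prime). Since $p\nmid N$ the modular curve has good reduction, and the correspondence $X_H(p,N)=X_0(p)\times_{X(1)}X_H(N)$ has semistable reduction over $\ZZ_p$: its special fibre is two copies of $X_H(N)_{\FF_p}$ meeting at the supersingular points, the two degeneracy maps restricting to $(\mathrm{id},\mathrm{Frob})$ on one component and to $(\mathrm{Frob},\mathrm{id})$ on the other, up to a twist by the diamond operator $\langle p\rangle$. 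Hence $T_p$ reduces, as an operator on divisors, to a combination of the graph of Frobenius and its transpose. Now $Q\in X(\QQ_p)$ extends to a point of $\mathfrak{X}(\ZZ_p)$ and specialises to $\bar Q\in\mathfrak{X}(\FF_p)$; the absolute Frobenius is the identity on $\FF_p$-rational points, so $\mathrm{Frob}(\bar Q)=\bar Q$, and because $\mathrm{Frob}$ is a purely inseparable self-map of the special fibre fixing $\bar Q$, its transpose sends $\bar Q$ to $\bar Q$ with multiplicity $p$; the diamond operator fixes $\bar Q$ as well (this is automatic whenever the diamond operators act trivially on $X_H(N)$, e.g.\ when $H$ contains the scalar matrices, as for $X_0(N)$, $X_0^+(N)$, $X_{\mathrm{ns}}^+(N)$). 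Therefore every point of the divisor $T_p(\bar Q)$ equals $\bar Q$, i.e.\ each $Q_i$ reduces to $\bar Q$ and lies in the residue disc of $Q$, and likewise for the $R_i$.

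A more hands-on route --- which I would in fact use, to keep track of the level structure rather than only the $j$-invariant --- is to reduce an explicit $p$-isogeny $E_{Q}\to E_{Q_i}$ modulo $p$. Its kernel is a subgroup scheme of $\bar E_{Q}$ of order $p$, hence $\mu_p$, $\ZZ/p\ZZ$, or $\alpha_p$, so $\bar E_{Q_i}$ is $\bar E_{Q}^{(p)}$ or its inverse Frobenius twist; both are isomorphic to $\bar E_{Q}$ because $j(\bar E_{Q})\in\FF_p$, and one then checks that the transported level structure stays in the same $H$-orbit. The supersingular residue discs, finite in number, need separate (routine) handling, since there the model of $X_H(p,N)$ is not smooth; this bookkeeping is the part of (1) I would expect to demand the most care.

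For part (2), the plan is to read off the spectrum of $A$. By Atkin--Lehner--Li newform theory, $\calS_2(\Gamma)$ is a direct sum of pieces indexed by the newforms $f$ of level $M\mid N$ contributing to it (and, for $X_H^+(N)$, lying in the prescribed Atkin--Lehner eigenspace), and since $p\nmid N$ the operator $T_p$ commutes with the degeneracy maps $f(\tau)\mapsto f(d\tau)$ and so acts on each piece by the scalar $a_p(f)$. Thus the eigenvalues of $A$ are exactly the Hecke eigenvalues $a_p(f)$, and by the Ramanujan--Petersson bound in weight two --- equivalently the Weil ``Riemann hypothesis'' for the abelian variety $A_f$ attached to $f$ via Eichler--Shimura --- we have $|a_p(f)|\le 2\sqrt p$. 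Since $2\sqrt p<p+1$ (as $(\sqrt p-1)^2>0$), no eigenvalue of $A$ equals $p+1$, so $\det\!\big((p+1)I-A\big)=\prod_f\big(p+1-a_p(f)\big)\ne 0$; in fact it is a positive integer, being the value at $p+1$ of the monic integral characteristic polynomial of $A$. Here I would simply cite the Ramanujan/Weil bound and the Eichler--Shimura/Deligne--Rapoport input rather than reprove them, so the only genuinely delicate point in the whole proposition is the level-structure bookkeeping in part (1).
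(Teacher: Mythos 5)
Your proposal is correct, and for part (2) it is essentially the paper's argument: the paper simply invokes the Ramanujan bound $|a_p|\le 2\sqrt p$ to conclude every eigenvalue of $(p+1)I-A$ has absolute value at least $(\sqrt p-1)^2>0$; your extra remark that, since $p\nmid N$, the eigenvalues of $A$ on the full space (old part included) are exactly the newform eigenvalues $a_p(f)$ is a correct and implicitly needed justification of that step. For part (1) you take a genuinely different route. The paper never leaves the $j$-line: the $j(Q_i)$ are roots of $\Phi_p(j(Q),X)$, and the Kronecker congruence $\Phi_p(X,Y)\equiv(X^p-Y)(X-Y^p)\pmod{p\ZZ[X,Y]}$ (quoted from Diamond--Shurman as the Eichler--Shimura relation) forces each $j(Q_i)$ to reduce to $j(\bar Q)^p$ or $j(\bar Q)^{1/p}$, both equal to $j(\bar Q)$ since $\bar Q$ is an $\FF_p$-point; this is shorter, uniform in the ordinary and supersingular cases, and needs no level-structure bookkeeping. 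You instead reduce the Hecke correspondence itself modulo $p$ (Deligne--Rapoport, $T_p\equiv F+\langle p\rangle F^{t}$), or equivalently reduce the individual $p$-isogenies and classify their kernels. What this buys is a conclusion on $X_H$ itself rather than on its image in the $j$-line --- which is literally what ``same residue disc'' asserts --- and it makes explicit the scalar-matrix/diamond-operator hypothesis that the paper's $j$-invariant argument cannot see; you correctly note this hypothesis holds for $X_0(N)$, $X_0^+(N)$ and $X_{\ns}^+(N)$. The price is exactly the two points you flag: the supersingular discs (where the component description of the special fibre degenerates and your hands-on isogeny-kernel argument must carry the load) and the deferred check that the transported level structure stays in the same $H$-orbit; neither is a gap in substance, but they are the places where your argument does more work than the paper's, so they should be carried out rather than left as remarks.
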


\begin{proof}
     Let $j$ be the $j$-invariant function. The $Q_i$'s are $p$-isogenous to $Q$ and therefore $j(Q_i)$ are the roots of $\Phi_p(j(Q),X) = 0$, where $\Phi_p(X,Y)$ is the modular polynomial of level $p$. By the Eichler-Shimura relation \cite[Theorem 8.7.2]{ds}, $\Phi_p(X,Y) = (X^p - Y)(X - Y^p) \pmod{p\ZZ[X,Y]}$, so the endpoints of the summands are in the same residue disc. The Ramanujan bound on the Hecke eigenvalues $|a_p| \leq 2 \sqrt{p}$ implies that each eigenvalue of $(p+1)I - A$ has absolute value at least $p+1-2\sqrt{p} = (\sqrt{p}-1)^2 > 0$, so $(p+1)I-A$ is invertible.
\end{proof}

Therefore, we
have reduced the problem of computing Coleman integrals between any two arbitrary endpoints to computing tiny integrals of the
form $\int^Q_{Q_i} \omega$.

\begin{remark}  \label{remark:no exact term}
    We pause to note two important differences between the form of the equation \eqref{eq:fundamental-eqn} and the corresponding equation \cite[(7)]{bbk10}. One is that we work with a basis of $H^0(X, \Omega^1)$ rather than a full basis of de Rham cohomology (which has twice the dimension). The other is that since $T_p$ is defined on all of $X$ rather than on a wide open subspace, it pulls holomorphic forms back to holomorphic forms; in other words, $T_p$ acts on $H^0(X, \Omega^1)$ itself and not merely on de Rham cohomology. Consequently, there is no need to correct \eqref{eq:fundamental-eqn} with terms coming from the Fundamental Theorem of Calculus.
\end{remark}

  \subsection{A basis of \texorpdfstring{$H^0(X,\Omega^1)$}{H0XOmega1}}\label{zywina-basis}

  Computing the space of cusp forms for congruence subgroups $\Gamma(N), \Gamma_1(N)$ and $\Gamma_0(N)$ is well-studied and implemented in various software packages \cite{mf-stein,computingmf,sagemath}.
  
  For the congruence subgroups $H \subseteq \GL_2(\ZZ/N\ZZ)$, we adopt Zywina's approach \cite{zywina}. There is an isomorphism of $\CC$-vector spaces $\mathcal{S}_2(\Gamma(N),\QQ(\zeta_N))^H \cong H^0(X_H, \Omega_{X_H})$, where $\zeta_N$ is a $N$-th root of unity. It remains to specify the action of $\GL_2(\ZZ/N\ZZ)$ on $\mathcal{S}_2(\Gamma(N),\QQ(\zeta_N))$ \cite{zywina, BN20}. Recall that $\SL_2(\ZZ)$ is freely generated by the matrices $S := \begin{psmallmatrix}
0 & -1 \\ 1 & 0 
\end{psmallmatrix}$ and $ T := \begin{psmallmatrix}
1 & 1 \\ 0 & 1 \end{psmallmatrix}$.  Since cusp forms of $\mathcal{S}_2(\Gamma(N))$ have $q_N$-expansions for $q_N := e^{2\pi i /N}$, the slash-$k$ operator by $T$ introduces a factor of $\zeta_N^n$ into the $n$-th Fourier coefficient. On the other hand, the action by $S$ is given by a linear combination of the basis of cusp forms on $\Gamma(N)$ where the coefficients depend on a certain Atkin-Lehner operator $w_N$. Since $\GL_2(\ZZ/N\ZZ)/\SL_2(\ZZ/N\ZZ) \xrightarrow{\cong} (\ZZ/N\ZZ)^\times$, there is an action of $\SL_2(\ZZ/N\ZZ)$ induced from $\SL_2(\ZZ)$ on the cusp forms and the matrix $\begin{psmallmatrix}
        1 & 0 \\ 0 & d
    \end{psmallmatrix}$ acts on the coefficients of the $q_N$-expansion by $\zeta_N \mapsto \zeta_N^d$. One proceeds in a similar manner for Atkin-Lehner quotients of modular curves.
\subsection{Hecke operators as double coset operators}\label{sec:hecke_operator}

Hecke operators act on both cusp forms and points on the modular curve. To compute them as a double coset operator, we need the coset representatives of $\Gamma_H \backslash \Gamma_H \alpha \Gamma_H$. A few key lemmas will sketch a procedure to compute these representatives.

\begin{lemma}{(\cite{ds} Lemmata 5.1.1, 5.1.2)}\label{lemma:coset_rep}
Let $\Gamma, \Gamma_1, \Gamma_2$ be congruence subgroups and  $\alpha \in \GL_2^+(\QQ)$. Then,

\begin{enumerate}
    \item $\alpha^{-1} \Gamma \alpha \cap \SL_2(\ZZ) \leq \SL_2(\ZZ)$ is a congruence subgroup.
    \item There is a bijection:
    \begin{align*}
        (\alpha^{-1} \Gamma_1 \alpha \cap \Gamma_2 )\backslash \Gamma_2 &\leftrightarrow \Gamma_1 \backslash \Gamma_1 \alpha \Gamma_2 \\
         (\alpha^{-1} \Gamma_1 \alpha \cap \Gamma_2 )\gamma_2 &\mapsto \Gamma_1 \alpha \gamma_2
    \end{align*}

\noindent    More precisely, $\{\gamma_{2,i}\}$ is a set of coset representatives for $(\alpha^{-1} \Gamma_1 \alpha \cap \Gamma_2 )\backslash \Gamma_2$ if and only if $\{\alpha \gamma_{2,i}\}$ is a set of coset representatives of $\Gamma_1 \backslash \Gamma_1 \alpha \Gamma_2$.
\end{enumerate}
\end{lemma}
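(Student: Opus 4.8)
The plan is to establish the two parts in sequence, since the finiteness needed in part (2) rests on part (1). Throughout, recall that a congruence subgroup of $\SL_2(\ZZ)$ is, by definition, a finite-index subgroup containing some principal congruence subgroup $\Gamma(M)$, so it suffices in each case to exhibit such an $M$ and check closure under the group operations.

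For part (1), the strategy is to clear denominators and conjugate a sufficiently deep principal congruence subgroup. Fix $N$ with $\Gamma(N) \subseteq \Gamma$, and choose a positive integer $m$ so that both $m\alpha$ and $m\alpha^{-1}$ have integer entries (a common denominator of the entries of $\alpha$ and of $\alpha^{-1}$ works). Given $\gamma \in \Gamma(m^2 N)$, write $\gamma = I + m^2 N \delta$ with $\delta \in M_2(\ZZ)$; then
\[
\alpha \gamma \alpha^{-1} = I + N\,(m\alpha)\,\delta\,(m\alpha^{-1}) \in I + N\, M_2(\ZZ),
\]
and since this matrix also has determinant $1$ it lies in $\Gamma(N) \subseteq \Gamma$. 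Hence $\gamma \in \alpha^{-1}\Gamma\alpha$, and as $\gamma \in \SL_2(\ZZ)$ we conclude $\Gamma(m^2 N) \subseteq \alpha^{-1}\Gamma\alpha \cap \SL_2(\ZZ)$. Because conjugation is a group isomorphism of $\GL_2(\QQ)$ and the intersection of two subgroups is a subgroup, $\alpha^{-1}\Gamma\alpha \cap \SL_2(\ZZ)$ is a subgroup of $\SL_2(\ZZ)$ containing a principal congruence subgroup, i.e., a congruence subgroup.

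For part (2), I would build the bijection from the obvious surjection. Define $\varphi \colon \Gamma_2 \to \Gamma_1 \backslash \Gamma_1 \alpha \Gamma_2$ by $\gamma_2 \mapsto \Gamma_1 \alpha \gamma_2$; this is surjective because any element of $\Gamma_1 \alpha \Gamma_2$ has the form $\gamma_1 \alpha \gamma_2$ and $\Gamma_1 \gamma_1 \alpha \gamma_2 = \Gamma_1 \alpha \gamma_2$. For the fibers, $\varphi(\gamma_2) = \varphi(\gamma_2')$ holds precisely when $\alpha\,\gamma_2 \gamma_2'^{-1}\,\alpha^{-1} \in \Gamma_1$, i.e.\ when $\gamma_2 \gamma_2'^{-1} \in \alpha^{-1}\Gamma_1\alpha$; since $\gamma_2 \gamma_2'^{-1}$ already lies in $\Gamma_2 \subseteq \SL_2(\ZZ)$, this is equivalent to $\gamma_2\gamma_2'^{-1} \in \alpha^{-1}\Gamma_1\alpha \cap \Gamma_2$. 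Therefore $\varphi$ descends to the claimed bijection $(\alpha^{-1}\Gamma_1\alpha \cap \Gamma_2)\backslash\Gamma_2 \xrightarrow{\ \sim\ } \Gamma_1\backslash\Gamma_1\alpha\Gamma_2$, $(\alpha^{-1}\Gamma_1\alpha\cap\Gamma_2)\gamma_2 \mapsto \Gamma_1\alpha\gamma_2$. The ``more precisely'' clause is just this bijection read off on transversals, and part (1) (applied with $\Gamma = \Gamma_1$, then intersected with $\Gamma_2$) guarantees the relevant index is finite, so that finite sets of coset representatives indeed exist.

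The only real subtlety, and the reason part (1) is invoked inside part (2), is the denominator bookkeeping in part (1): one must conjugate a sufficiently deep $\Gamma(m^2N)$ rather than $\Gamma(N)$ itself, so that $\alpha\gamma\alpha^{-1}$ genuinely has integer entries and not merely rational ones. Everything else is a formal manipulation of cosets, and I do not anticipate any obstacle beyond getting this exponent right.
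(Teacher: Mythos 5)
Your proof is correct, and it is essentially the standard argument from the cited reference (Diamond--Shurman, Lemmata 5.1.1--5.1.2), which the paper invokes without reproducing: conjugating a sufficiently deep principal congruence subgroup after clearing denominators of $\alpha^{\pm 1}$, and reading the fibers of $\gamma_2 \mapsto \Gamma_1\alpha\gamma_2$ as right cosets of $\alpha^{-1}\Gamma_1\alpha \cap \Gamma_2$. No gaps; the only cosmetic difference is your choice of modulus $m^2N$ in place of the reference's single exponent, which changes nothing.
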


\begin{lemma}{(\cite{shimura} Lemma 3.29(5))} \label{lemma:shimura_coset}
Let $\alpha \in M_2(\ZZ)$ be such that $\det(\alpha) = p$ and $\alpha \pmod{N} \in H$. If $\Gamma_H \alpha \Gamma_H = \bigcup_i \Gamma_H \alpha_i$ is a disjoint union, then $\SL_2(\ZZ) \alpha \SL_2(\ZZ) = \bigcup_i \SL_2(\ZZ)\alpha_i$ is a disjoint union.
\end{lemma}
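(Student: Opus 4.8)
The plan is to prove Lemma~\ref{lemma:shimura_coset} directly from the structure of the double coset $\Gamma_H \alpha \Gamma_H$, reducing the statement about $\SL_2(\ZZ)$-cosets to a statement about $\Gamma_H$-cosets together with the fact that $\Gamma_H$ has finite index in $\SL_2(\ZZ)$. First I would observe that since $\SL_2(\ZZ) \supseteq \Gamma_H$, we automatically have $\SL_2(\ZZ)\alpha\SL_2(\ZZ) \supseteq \bigcup_i \SL_2(\ZZ)\alpha_i$, and conversely that every element of $\SL_2(\ZZ)\alpha\SL_2(\ZZ)$ can be written $\gamma_1 \alpha \gamma_2$ with $\gamma_j \in \SL_2(\ZZ)$; writing $\gamma_2$ in terms of a $\Gamma_H$-coset decomposition of $\SL_2(\ZZ)$ and absorbing appropriately, one sees the union is all of $\SL_2(\ZZ)\alpha\SL_2(\ZZ)$. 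The substantive content is therefore \emph{disjointness}.

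For disjointness, suppose $\SL_2(\ZZ)\alpha_i = \SL_2(\ZZ)\alpha_j$, i.e.\ $\alpha_i\alpha_j^{-1} \in \SL_2(\ZZ)$ (interpreted inside $\GL_2(\QQ)^+$, noting $\det\alpha_i = \det\alpha_j = p$ so $\alpha_i\alpha_j^{-1}$ has determinant $1$). The goal is to upgrade this to $\alpha_i\alpha_j^{-1} \in \Gamma_H$, which would force $\Gamma_H\alpha_i = \Gamma_H\alpha_j$ and hence $i = j$. Since $\alpha \equiv \alpha_i \pmod{N}$ up to the $\Gamma_H$-action on the left, one tracks the reduction mod $N$: because $\alpha_i, \alpha_j$ both arise from the single double coset $\Gamma_H\alpha\Gamma_H$ with $\alpha \bmod N \in H$, and $\Gamma_H \bmod N \subseteq \SL_2(\ZZ/N\ZZ) \cap H$, the matrix $\alpha_i\alpha_j^{-1}$ — which lies in $\SL_2(\ZZ)$ by assumption — reduces mod $N$ into $H$ (one has to be slightly careful since $\alpha, \alpha_i$ are non-invertible mod $p$, but they are invertible mod $N$ as $p \nmid N$, so reduction mod $N$ is harmless). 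Hence $\alpha_i\alpha_j^{-1} \in \Gamma_H$ and we are done. Alternatively, and perhaps more cleanly, I would invoke Lemma~\ref{lemma:coset_rep}(2): the $\alpha_i$ biject with cosets of $(\alpha^{-1}\Gamma_H\alpha \cap \Gamma_H)\backslash \Gamma_H$, while the $\SL_2(\ZZ)$-cosets in $\SL_2(\ZZ)\alpha\SL_2(\ZZ)$ biject with $(\alpha^{-1}\SL_2(\ZZ)\alpha \cap \SL_2(\ZZ))\backslash \SL_2(\ZZ)$; a counting argument comparing indices, using $[\SL_2(\ZZ):\Gamma_H] = [\SL_2(\ZZ)\alpha : \Gamma_H\alpha]$ and the compatibility of the two conjugation-intersection subgroups, shows the natural map from $\Gamma_H$-cosets to $\SL_2(\ZZ)$-cosets is exactly $[\SL_2(\ZZ):\Gamma_H]$-to-one, and in particular the $\alpha_i$ (which are a transversal after we mod out by the left $\Gamma_H$-action) remain distinct mod $\SL_2(\ZZ)$ precisely because the double coset $\SL_2(\ZZ)\alpha\SL_2(\ZZ)$ has the expected number $p+1$ of cosets.

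The main obstacle I anticipate is bookkeeping around the fact that $\alpha$ and the $\alpha_i$ are integer matrices of determinant $p$, hence not units in $M_2(\ZZ)$ and not invertible mod $p$: one must be careful that "$\alpha_i\alpha_j^{-1} \in \SL_2(\ZZ)$" is a statement in $\GL_2(\QQ)$ and that its reduction mod $N$ is well-defined and lands in $H$. The cleanest route is to fix, once and for all, the standard transversal for $\SL_2(\ZZ)\alpha\SL_2(\ZZ)$ with $\det = p$ — the $p+1$ matrices $\begin{psmallmatrix} 1 & i \\ 0 & p \end{psmallmatrix}$ for $0 \le i \le p-1$ together with $\begin{psmallmatrix} p & 0 \\ 0 & 1 \end{psmallmatrix}$, as in Example~\ref{eg:hecke_N0} — show abstractly that $\#\{\alpha_i\} = [\,(\alpha^{-1}\Gamma_H\alpha\cap\Gamma_H)\backslash\Gamma_H\,]$ is divisible by... no: rather, show the composite map $\{\Gamma_H\alpha_i\} \to \{\SL_2(\ZZ)\beta : \beta \text{ in the transversal}\}$ is \emph{surjective} and, since $\Gamma_H\alpha\Gamma_H$ is a union of $\le (p+1)[\SL_2(\ZZ):\Gamma_H]$ many $\Gamma_H$-cosets mapping onto $p+1$ many $\SL_2(\ZZ)$-cosets, a pigeonhole forces the map restricted to any set of $\alpha_i$'s representing distinct $\SL_2(\ZZ)$-cosets to be injective. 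I expect the paper's proof to take the shorter conjugation-and-reduce-mod-$N$ route, so that is what I would write up, relegating the index bookkeeping to a parenthetical remark.
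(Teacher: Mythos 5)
The paper offers no proof of this lemma at all --- it is quoted verbatim from Shimura (Lemma~3.29(5)) --- so there is no in-paper argument to compare against; I will assess your proposal on its own terms. Your disjointness argument is correct and complete in outline: each $\alpha_k \in \Gamma_H\alpha\Gamma_H$ is a product of three matrices whose reductions mod $N$ lie in $H$, so $\bar\alpha_k \in H$; since $\det\alpha_j = p$ is prime to $N$, $\bar\alpha_j$ is invertible in $\GL_2(\ZZ/N\ZZ)$, and reducing the identity $(\alpha_i\alpha_j^{-1})\alpha_j = \alpha_i$ mod $N$ gives $\overline{\alpha_i\alpha_j^{-1}} = \bar\alpha_i\bar\alpha_j^{-1} \in H$; combined with $\alpha_i\alpha_j^{-1} \in \SL_2(\ZZ)$ this puts $\alpha_i\alpha_j^{-1}$ in $\Gamma_H$, whence $i = j$. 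This is the right argument and you correctly flag the only subtlety (invertibility mod $N$ versus mod $p$).

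The gap is in the covering claim, which you dismiss as routine ("the substantive content is therefore disjointness") but which is where the hypothesis $\gcd(p,N)=1$ does its real work. Covering is equivalent to $\SL_2(\ZZ)\alpha\SL_2(\ZZ) = \SL_2(\ZZ)\alpha\Gamma_H$, i.e.\ to $\SL_2(\ZZ) = \bigl(\alpha^{-1}\SL_2(\ZZ)\alpha\cap\SL_2(\ZZ)\bigr)\cdot\Gamma_H$. Your ``absorb appropriately'' step does not produce this: writing $\gamma_2 = \gamma'\delta$ with $\gamma'\in\Gamma_H$ and $\delta$ a coset representative, you can convert $\alpha\gamma'$ into $\nu\alpha_i$ with $\nu\in\Gamma_H$, but $\delta$ remains stranded on the far right of $\alpha_i$; writing $\gamma_2 = \delta\gamma'$ instead traps $\delta$ between $\alpha$ and $\Gamma_H$. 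Either way you are reduced to showing $\alpha\delta \in \SL_2(\ZZ)\alpha\Gamma_H$ for arbitrary $\delta\in\SL_2(\ZZ)$, which is the claim itself. Your fallback counting argument is likewise circular: it presupposes that the map from the $\Gamma_H$-cosets $\Gamma_H\alpha_i$ to the $p+1$ cosets of $\SL_2(\ZZ)\backslash\SL_2(\ZZ)\alpha\SL_2(\ZZ)$ is surjective, which is exactly the covering statement (and injectivity comes from your mod-$N$ argument, not from pigeonhole). The missing ingredient is a strong-approximation step: given $\gamma\in\SL_2(\ZZ)$, use surjectivity of $\SL_2(\ZZ)\to\SL_2(\ZZ/Np\ZZ)$ to choose $\delta\in\SL_2(\ZZ)$ with $\delta\equiv I\pmod N$ and $\delta\equiv\gamma\pmod p$. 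Then $\delta\in\Gamma(N)\subseteq\Gamma_H$, while $\gamma\delta^{-1}\in\Gamma(p)$; since $p\alpha^{-1} = \mathrm{adj}(\alpha)$ is integral, one checks $\alpha(I+pM)\alpha^{-1} = I + \alpha M\,\mathrm{adj}(\alpha)$ is integral, so $\Gamma(p)\subseteq\alpha^{-1}\SL_2(\ZZ)\alpha\cap\SL_2(\ZZ)$, and $\gamma = (\gamma\delta^{-1})\delta$ exhibits the required factorization. With that inserted, your proof is complete; this factorization is precisely one of the hypotheses Shimura verifies before invoking his Lemma~3.29.
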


The procedure for computing the Hecke operator as a double coset operator is as follows:

\begin{enumerate}
    \item Find $\alpha \in M_2(\ZZ)$ satisfying $\det(\alpha) = p$, $\alpha \pmod{N} \in H$.
    
    \item Find the coset representatives $\{\alpha_i\}$ in $(\alpha^{-1} \SL_2(\ZZ) \alpha \cap  \SL_2(\ZZ))\backslash \SL_2(\ZZ)$. Usually, $\alpha$ will be chosen such that $(\alpha^{-1} \SL_2(\ZZ) \alpha \cap  \SL_2(\ZZ))$ has a simple description. By Lemma \ref{lemma:coset_rep}, $\SL_2(\ZZ)\backslash \SL_2(\ZZ) \alpha \SL_2(\ZZ)$ has coset representatives $\{\alpha \alpha_i\}$.
    
    \item By Lemma \ref{lemma:shimura_coset}, for each $\alpha \alpha_i$, find $\beta_i \in \SL_2(\ZZ)$ such that $\beta_i \alpha \alpha_i \in \Gamma_H$. Then $\{ \beta_i \alpha \alpha_i\}$ will be the desired coset representatives for $\Gamma_H \backslash \Gamma_H \alpha \Gamma_H$.
\end{enumerate}

In most cases, our choice of uniformizer at $Q$ will be $j- j(Q)$, where $j$ is the $j$-invariant function. The Hecke operator acts on $\Div(X)$ by sending a point to a sum of points that are $p$-isogenous to it, whose $j$-invariants are the roots of $\Phi_p(j(Q),X) = 0$. A table of small modular polynomials is available in \cite{bls10, bos16}.

\subsection{Tiny integrals from complex approximation}

Theorem \ref{coleman_def} implies that $\int_{Q_i}^Q \omega = \int_{q(Q_i)}^{q(Q)} f(q) dq/q$, where $f(q)$ is the Fourier expansion of the corresponding cusp form, can be expressed as a power series in terms of a uniformizer. 
We compute Taylor coefficients of the cusp forms and uniformizer around a point, and recover the power series coefficients as algebraic approximations of the complex solutions of a system of equations. The algebraic approximations are done using standard lattice-reduction libraries (e.g., \texttt{algdep} from \texttt{PARI/GP}).

\begin{algorithm}
Computing $\sum_{i=0}^{p}\int^Q_{Q_i} \omega$. \label{alg:tiny_integral}

\textbf{Input:}
\begin{itemize}
    \item $\tau_0 \in \HH$ such that $\Gamma\tau_0$ corresponds to a rational point $Q$ on $X$, and $q_0 := e^{2\pi i \tau_0/h}$ where $h$ is the width of the cusp.
    \item A good prime $p$ which does not divide $j(Q)$ or $j(Q)-1728$. 
    \item A cusp form $f\in \mathcal{S}_2(\Gamma)$ specified via its $q$-expansion where $q = e^{2\pi i \tau/h}$. We denote the corresponding $1$-form by $\omega$.

\end{itemize}

\textbf{Output:}
\begin{itemize}
    \item The sum of tiny Coleman integrals $\sum^p_{i=0}\int^Q_{Q_i} \omega \in \QQ_p$, where $T_p(Q) = \sum_{i=0}^p Q_i$.
\end{itemize}

\textbf{Steps:}
\begin{enumerate}
\item[1.] \label{algstep:tiny_1} (Writing $\omega$ as a power series in terms of a uniformizer $u$) Fix a precision $n$. Find $x_i \in \QQ$ such that
\begin{align} \label{eq:omega_j_exp}
    \omega = \left(\sum_{i=0}^n x_i(u)^n + O(u^{n+1})\right)d(u).
\end{align}

\begin{enumerate}
    \item[a.] Perform Taylor expansions of $u$ and $\omega_i$ at $q=q_0$:
    \begin{align*}
    & \omega = \sum_{i=0}^{C_1} b_i(q-q_0)^i + O((q-q_0)^{C_1+1})dq,\\
    & u = \sum_{i=1}^{C_2} a_i(q-q_0)^i + O((q-q_0)^{C_2+1}),\\
    & d(u) = \left(\sum_{i=1}^{C_2} ia_i(q-q_0)^{i-1} + O((q-q_0)^{C_2})\right)dq,
\end{align*}
    where $C_1,C_2$ are some fixed precision determined by $n$ and the norm of $q_0$. The coefficients $a_i,\,b_i$ are in $\CC$.
    \item[b.] Substitute the Taylor expansions of $\omega,\,u$ and $d(u)$ at $q = q_0$ in equation (\ref{eq:omega_j_exp}). Comparing the coefficients of $(q-q_0)^k$ on both sides yields the following linear system:
    \[
\begin{pmatrix}
    a_1       & 0 & 0 & \dots & 0 \\
    2a_2       & a_1^2 & 0 & \dots & 0 \\
    3a_3       & 3a_1a_2 & a_1^3 & \dots & 0\\
    \vdots       & \vdots & \vdots & \ddots & \vdots    \\
    (n+1)a_{n+1}      & \sum_{i=1}^{n}a_i(n+1-i)a_{n+1-i} &\ldots & \dots & a_1^{n+1}
\end{pmatrix} \cdot
\begin{pmatrix}
    x_0      \\
    x_1      \\
    x_2     \\
    \vdots   \\
    x_n
\end{pmatrix} = 
\begin{pmatrix}
    b_0   \\
    b_1     \\
    b_2  \\
    
    \vdots   \\
    b_n
\end{pmatrix}
\]
    \item[c.] The solutions $\textbf{x} = (x_0 \ldots x_n)^T$ of this system of equations can be recovered as elements in $\QQ$ using \texttt{algdep} from \texttt{PARI/GP}.

\end{enumerate}

\item[2.] \label{algstep:tiny_2} Compute $u(Q_i)$ from transcendental and algebraic relations.

\item[3.] \label{algstep:tiny_3} Compute the sum of tiny integrals $\sum\limits_{i=0}^p\int_{Q}^{Q_i}\omega \approx \sum\limits_{i=0}^p \int^{u(Q_i)}_0 (\sum_{j=0}^n x_j u^j du)$ with its $p$-adic expansion.
\end{enumerate}
\end{algorithm}

\begin{remark}
In Step (2) of Algorithm \ref{alg:tiny_integral}, we often use the $j$-invariant function as a uniformizer. We calculate $j(Q_i)$ transcendentally by evaluating the $j(\tau_{Q_i})$, where $\tau_{Q_i} \in \HH$ is the corresponding point in the fundamental domain of $\Gamma$, and then obtain an algebraic approximation. On the other hand, the roots of the modular polynomial $\Phi_p(x,j(Q)) = 0$ are the $j$-invariants of elliptic curves that are $p$-isogenous to $Q$. This gives another algebraic method to compute $j(Q_i)$ and check for correctness.

\end{remark}

\section{Examples}\label{section4}

In this section, we will demonstrate the model-free Coleman integration algorithm for three classes of examples $X_0(N)$, $X_0^+(N)$, and $X_{\ns}^+(N)$, while gathering the necessary ingredients such as known rational points, basis of differentials and the action of the Hecke operators.

\subsection{The modular curve \texorpdfstring{$X_0(N)$}{X0N}}\label{sec:X_0_N}

The modular curve $X = X_0(N)$ is defined as the quotient of the upper half plane by the congruence subgroup $\Gamma_0(N) = \{ \begin{psmallmatrix} \ast & \ast \\ 0 & \ast \end{psmallmatrix} \pmod{N} \} \subseteq \SL_2(\ZZ)$. As a moduli space, the $\QQ$-rational points of $X$ correspond to pairs $(E,\phi)$ where $E$ is an elliptic curve defined over $\QQ$ and $\phi:E\rightarrow E'$ is a $\QQ$-rational isogeny of degree $N$. To search for rational points on $X$, one could start with a naive box search if a model is available. Otherwise, one can still use CM theory to identify all rational points on $X$ arising from cusps or CM elliptic curves which give rise to rational points on $X$; in most cases we expect this to yield all of the rational points. Suppose that only the $j$-invariant of the rational point $Q$ is known. To find the corresponding coset representative on the upper half plane, consider the universal elliptic curve $E$, which provides a construction of an elliptic curve with $j$-invariant $j_E$: \[
y^2 + xy = x^3 - \frac{36}{j_E -1728}x - \frac{1}{j_E - 1728}.
\]

\noindent Substituting $j_E = j(Q)$ yields an elliptic curve $E \cong E_{\tau'} = \CC /(\ZZ + \ZZ \tau')$ such that $j(\tau') = j(Q)$, with $\tau \in \HH$. By iterating through the cosets of $\SL_2(\ZZ)/\Gamma_0(N)$, we can find $\gamma$ such that $j(\gamma \tau') = j( N\gamma \tau') = j(Q)$.

As a result, the point $Q$ corresponds to $\Gamma_0(N) \gamma \tau' \in \Gamma_0(N) \backslash \HH^+$. One can find a basis of weight $2$ cusp forms $\mathcal{S}_2(\Gamma_0(N))$ and the action of Hecke operators on the basis of cusp forms using well-known methods that are implemented in \SageMath{} \cite{mf-stein,sagemath}. For $Q \in X(\QQ)$, let $j(\tau)-j(Q)$ be the chosen uniformizer.

\subsubsection{Example: $X_0(37)$}\label{sec:eg 37}

\begin{itemize}
\item \textbf{Curve data: } We consider the modular curve $X = X_0(37)$. $X$ is a hyperelliptic curve of genus $2$. Comparing relations between $q$-expansions of rational functions $x,\,y \in \CC(X)$, we obtain a plane model $$y^2 = - x^6 - 9x^4 - 11x^2 + 37.$$ It is known that there are four $\QQ$-rational points $Q = (1,-4),\,R = (-1,-4),\, S=(1,4),\,T = (-1,4)$, where $Q,R$ are noncuspidal rational points and $S,T$ are cuspidal rational points \cite{maswd}.

\item \textbf{Rational points:}  Since the $j$-function is a
modular function on $X_0(37)$ and $X_0(37)$ is hyperelliptic, we can
express the $j$-function as a rational function of $x$ and $y$ to compute
that
\begin{align*}
  j(Q) &= -9317 = - 7 \cdot 11^3, \\
  j(R) &= -162677523113838677= - 7 \cdot 137^3 \cdot 2083^3.
\end{align*}

The points $Q, R$ correspond to the elliptic curves $E_Q, E_R$ with
$j$-invariants $j(Q), j(R)$ containing cyclic subgroups of order $37$ (or equivalently, degree $37$-isogenies); this  information can be verified in the L-Function and Modular Forms Database (LMFDB) \cite{lmfdb}. Following the method presented in \S \ref{sec:X_0_N}, we obtain the upper half-plane representatives of $Q,R$: \begin{align*} \tau_Q &\approx 0.5 + 0.17047019819380\cdot i \in \HH , \\ \tau_R &\approx 0.5 + 0.39635999889406\cdot i \in \HH. \end{align*}

\item \textbf{Basis of differential forms:} $\mathcal{S}_2(\Gamma_0(37))$ has $\CC$-dimension $2$. A basis of the space of weight $2$ cusp forms is available on $\SageMath$. Linear algebra yields an eigenbasis $\{f_0,\,f_1\}$ with the following $q$-expansions:
\begin{align*} f_0 &= q+q^3-2q^4+O(q^6), \\ f_1 &= q-2q^2-3q^3+2q^4-2q^5+O(q^6) .\end{align*}

\item \textbf{Hecke action:} We choose $p=3$.  One could compute that $T_3 = \begin{psmallmatrix}
1&0\\
0&-3
\end{psmallmatrix}$ on this eigenbasis. Furthermore, we find $j(Q_i), j(R_i)$ as roots of the modular polynomials $\Phi_3(j(Q),X) = 0$, $\Phi_3(j(R),X) = 0$ where $\Phi_3(X,Y)$ is the modular polynomial of level $3$. 

\item \textbf{Algorithm \ref{alg:tiny_integral} and results:} To maintain consistency with the hyperelliptic model, set:
\begin{align*}
    \omega_0 &:= -\frac{1}{2}f_0 \frac{dq}{q} = \frac{dx}{y}, \\
    \omega_1 &:= -\frac{1}{2}f_1 \frac{dq}{q} = \frac{xdx}{y}.
\end{align*}


By comparing complex coefficients and using $\texttt{algdep}$ to algebraically approximate complex numbers, we obtain rational coefficients $x_i$ in the expansion of $\omega_1$ about $j=j(Q)$:
\begin{align*} \omega_1  = &(-9317) + \frac{717409}{2 \cdot 37 \cdot 47}(j-j(Q))
                             + \frac{253086749261192}{37^2 \cdot 47^3}(j-j(Q))^2
  \\ &+ \frac{176804544077038351043955}{37^3 \cdot 47^5}(j-j(Q))^3 +
       O((j-j(Q))^4) \ \ d(j-j(Q)). \end{align*}
     After that, we substitute the roots into a sum of local power series: 
\begin{align*}
    \sum_{i=0}^3 \int_{Q_i}^Q \omega_1 = &\sum_{i=0}^3 \int_{j(Q_i)-j(Q)}^{0} (-9317) + \frac{717409}{2 \cdot 37 \cdot 47}t + \frac{253086749261192}{37^2 \cdot 47^3}t^2 \\&+ \frac{176804544077038351043955}{37^3 \cdot 47^5}t^3 +  \cdots  dt.
\end{align*}

One could repeat the same processes for $\omega_0$ and $j = j(R)$. The results are listed in the table below; they agree with the results of the $\Magma$ implementation from \cite{balatuit}.
\end{itemize}

\begin{table}[h]

    \centering

    \begin{tabular}{|l|l|}
    \hline
    \rule{0pt}{4ex}

$\sum_{i=0}^{3}\int^Q_{Q_i} \omega_0 $   &$O(3^{14}) $
            \rule{0pt}{4ex} \\
\hline
            \rule{0pt}{4ex}
        $\sum_{i=0}^{3}\int^Q_{Q_i} \omega_1 $  & $3^{2} + 3^{3} + 3^{9} + 3^{10} + 2\cdot 3^{11} +  3^{12} + 2\cdot 3^{13}+ O(3^{14})$
            \rule{0pt}{4ex}
\\\hline
            \rule{0pt}{4ex}
       $\sum_{i=0}^{3}\int^R_{R_i} \omega_0 $  &$O(3^{14}) $    \rule{0pt}{4ex}    
\\\hline
           \rule{0pt}{4ex}    
        $\sum_{i=0}^{3}\int^R_{R_i} \omega_1 $ &$3^{2} + 3^{3} + 3^{9} + 3^{10} + 2\cdot 3^{11} +  3^{12} + 2\cdot 3^{13}+ O(3^{14})$      \rule{0pt}{4ex}    
\\\hline
        
    \end{tabular}
    \caption{Coleman Integrations on $X_0(37)$ as in \S~\ref{sec:eg 37}.}
    \label{table:X_0_37_results}
\end{table}

\subsection{The Atkin-Lehner quotient \texorpdfstring{$X_0^+(N)$}{X0+N}}\label{sec:X_0_N+}

Consider the modular curve $X_0(N)$ from the previous section. There is an Atkin-Lehner involution $w_N := \frac{1}{\sqrt{N}} \begin{psmallmatrix} 0 & -1 \\ N & 0 \end{psmallmatrix}$ acting on $X_0(N)$. One could verify that $w_N^2$ acts as the identity on the $\Gamma_0(N)$-orbits of $\HH$. Let $\Gamma_0^+(N) := \Gamma_0(N) \cup w_N \Gamma_0(N)$. The compactification of the quotient of the upper half plane by $\Gamma_0^+(N)$ gives rise to the modular curve $X:=X_0^+(N)$.

\begin{proposition}\label{moduli_Nplus}
Suppose $\Gamma_0(N) \tau \in X_0(N)$ corresponds to the elliptic curve with torsion data $(E_1, \phi:E_1\rightarrow E_2)$, then $w_N(\Gamma_0(N) \tau )$  corresponds to $(E_2,\hat{\phi}\colon E_2\rightarrow E_1)$, where $\hat{\phi}$ is the dual isogeny of $\phi$.
\end{proposition}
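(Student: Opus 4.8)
The plan is to pass to the complex-analytic uniformization, track the effect of $w_N$ on the marked data through an explicit lattice scaling, and isolate the one genuinely algebraic input: an identity about dual isogenies valid over any base. Recall the moduli dictionary on $X_0(N)$: the point $\Gamma_0(N)\tau$ corresponds to the pair $(E_\tau, C_\tau)$ with $E_\tau = \CC/\Lambda_\tau$, $\Lambda_\tau = \ZZ\tau + \ZZ$, and $C_\tau = \langle \tfrac1N + \Lambda_\tau\rangle$ the cyclic subgroup of order $N$; the associated degree-$N$ isogeny $\phi$ is the quotient $E_\tau \to E_\tau/C_\tau$. One should fix this normalization at the outset, since the other common convention $C_\tau = \langle \tfrac\tau N + \Lambda_\tau\rangle$ merely interchanges some of the bookkeeping below.

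First I would record the purely algebraic fact, valid over any field: given $(E,C)$ with $C$ cyclic of order $N$ and $\phi\colon E \to E' := E/C$, one has $C \subseteq E[N]$, hence $\ker\hat\phi = \phi(E[N]) = E[N]/C$; call this subgroup $C' \subseteq E'$, which is again cyclic of order $N$, and $\hat\phi$ induces an isomorphism $E'/C' \xrightarrow{\sim} E$. Consequently the enhanced elliptic curve $(E', C')$ is, as a pair consisting of a curve with a marked degree-$N$ isogeny, exactly $(E', \hat\phi\colon E' \to E)$. Thus it suffices to show that $w_N(\Gamma_0(N)\tau)$ corresponds to the pair $(E_\tau/C_\tau,\, E_\tau[N]/C_\tau)$.

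Second I would do the lattice computation. Since the scalar $\tfrac1{\sqrt N}$ acts trivially on $\HH$, $w_N$ sends $\tau$ to $-1/(N\tau)$, so its marked datum is $(\CC/\Lambda_{w_N\tau},\, \langle \tfrac1N + \Lambda_{w_N\tau}\rangle)$ with $\Lambda_{w_N\tau} = \ZZ\tfrac{-1}{N\tau} + \ZZ$. Multiplication by $\tau$ carries $\Lambda_{w_N\tau}$ isomorphically onto $\tfrac1N\ZZ + \tau\ZZ$, which is the lattice of $E_\tau/C_\tau$, and carries the marked generator $\tfrac1N$ to $\tfrac\tau N$, which generates $E_\tau[N]/C_\tau = \langle \tfrac\tau N + (\tfrac1N\ZZ + \tau\ZZ)\rangle$. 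Hence $w_N(\Gamma_0(N)\tau) \leftrightarrow (E_\tau/C_\tau,\, E_\tau[N]/C_\tau)$, and combining with the first step gives the claim; the statement for $\QQ$-rational points follows since $w_N$ is defined over $\QQ$ and formation of the dual isogeny commutes with the Galois action.

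The main obstacle is bookkeeping rather than conceptual: keeping the normalization of the moduli problem, the direction of $\phi$ versus $\hat\phi$, and the scalings mutually consistent. In particular, to be sure the induced isogeny $E'\to E'/C'$ really is $\hat\phi$ (and not $\hat\phi$ twisted by an automorphism of $E$), I would verify directly on lattices that the candidate map, e.g. $\CC/(\tfrac1N\ZZ + \tau\ZZ) \to \CC/\Lambda_\tau$, $z \mapsto Nz$, satisfies the defining relations $\hat\phi\circ\phi = [N]$ and $\phi\circ\hat\phi = [N]$, which pin it down uniquely. As a sanity check one can apply the proposition twice and recover $(E_1,\phi)$ via $\hat{\hat\phi} = \phi$, consistent with the fact that $w_N^2$ acts as the identity.
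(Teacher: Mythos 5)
Your proof is correct and follows essentially the same route as the paper's: pass to the complex uniformization, use the scaling by $\tau$ to identify $E_{w_N\tau}$ with $E_\tau/C_\tau$, and then identify the induced isogeny with the dual by comparing kernels (via $\ker\hat\phi=\phi(E[N])$) and the defining relation $\hat\phi\circ\phi=[N]$. If anything, you spell out in full the final step that the paper only sketches (``computing the dual isogeny and comparing kernels''), so no changes are needed.
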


\begin{proof}
$\Gamma_0(N)\tau$ corresponds to $[E_\tau,\langle\frac{1}{N},\tau \rangle]$ up to isomorphism. As $w_N \cdot \tau = \frac{-1}{N\tau}$, $w_N( \Gamma_0(N)\tau)$ corresponds to $[E_{\frac{1}{N\tau}}, \langle\frac{1}{N}, \frac{1}{N\tau} \rangle]$. There is an isomorphism between elliptic curves with a cyclic subgroup of order $N$ and complex tori given by:
\[
[E_\tau, \langle\frac{1}{N},\tau\rangle] \cong \mathbb{C}/(\frac{1}{N}\ZZ 
 + \ZZ\tau).
\]

 It is clear that  $\langle\frac{1}{N},\tau\rangle = \tau \langle 1,\frac{1}{N\tau}\rangle$, hence $E_{\frac{1}{N\tau}}$ is isomorphic to $E_\tau/\langle\frac{1}{N},\tau\rangle$. It remains to check that the dual isogeny of $\phi\colon E \rightarrow E_\tau/\langle\frac{1}{N},\tau\rangle$ is indeed the isogeny induced by $E_{\frac{1}{N\tau}}$. This can be checked by first computing the dual isogeny and comparing kernels.
\end{proof}

The above proposition provides a moduli interpretation for $X_0^+(N)$, i.e., the $\QQ$-points correspond to unordered pairs of elliptic curves $\{\phi_1\colon E_1 \rightarrow E_2, \phi_2\colon E_2 \rightarrow E_1\}$ such that $\phi_1$ is an isogeny of degree $N$, and $\phi_2$ is the dual isogeny, with the additional requirement that they are $\Gal(\overline{\QQ}/ \QQ)$-invariant. By CM theory, it is possible that the elliptic curves $E_1,E_2$ or the isogenies $\phi_1,\phi_2$ may not be defined over $\QQ$ but over a quadratic extension of $\QQ$, and in that case the elliptic curves or isogenies are fixed by the nontrivial Galois element of the quadratic extension.

The expected rational points on $X$ correspond to elliptic curves with complex multiplication. Following \cite{Mercuri2016EquationsAR,stark_classnumber1}, we have a list of discriminants of imaginary quadratic number rings with class number one:
\[
\mathcal{D} = \{ -3,\,-4,\,-7,\,-8,\,-11,\,-12,\,-16,\,-19,\,-27,\,-28,\,-43,\,-67,\,-163\}.
\]

Let $E$ be a CM elliptic curve such that its endomorphism ring $\mathcal{O}_E$ has discriminant $\Delta_E \in \mathcal{D}$. Elliptic curves $E$ such that $N$ splits or ramifies in $\mathcal{O}_E$ give rise to rational points on $X$ \cite{Galbraith_1999}. Iterating through the class number one discriminants, we have list of candidates of expected rational points coming from CM elliptic curves. We denote one of the rational points by $Q$.

The endomorphism ring $\mathcal{O}_E$ is an order in an imaginary quadratic field and therefore has a generator $\tau_E$ and we factor the ideal $(N)$ into a product of principal ideals $\mathfrak{m}\bar{\mathfrak{m}}$ in $\mathcal{O}_E$. Write $\mathfrak{m} = (\alpha)$. Since $\alpha \in \mathcal{O}_E$, there exists integers $c,d$ such that $\alpha = c \tau_E + d$. The Euclidean algorithm gives two integers $a,b$ such that $\gamma = \begin{psmallmatrix} a & b \\ c & d \end{psmallmatrix} \in \SL_2(\ZZ)$. In this case, the upper half plane representative is $\tau_Q = \gamma \cdot \tau_E$.

A basis of cusp forms is given by the forms that are fixed under the Atkin-Lehner involution, $\mathcal{S}_2(\Gamma_0^+(N)) = \{ f \in \mathcal{S}_2(\Gamma_0(N))\colon f[w_N]_2 = f\}$. The action of Hecke operators on $X_0^+(N)$ is given by the following lemma:

\begin{lemma}
Let $\alpha \in \GL_2^+(\QQ)$. The coset representatives of $(\alpha^{-1}\Gamma_0^+(N) \alpha \cap \Gamma_0^+(N))\backslash \Gamma_0^+(N)$ are the same as that of  $(\alpha^{-1}\Gamma_0(N) \alpha \cap \Gamma_0(N))\backslash \Gamma_0(N)$.
\end{lemma}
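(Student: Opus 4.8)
The plan is to reduce the statement to the bijectivity of a natural map between the two coset spaces. Set $\Gamma := \Gamma_0(N)$, $\Gamma^+ := \Gamma_0^+(N) = \Gamma \sqcup w_N\Gamma$, $\Delta := \alpha^{-1}\Gamma\alpha \cap \Gamma$ and $\Delta^+ := \alpha^{-1}\Gamma^+\alpha \cap \Gamma^+$, note $\Delta \subseteq \Delta^+$, and consider $\iota\colon \Delta\backslash\Gamma \to \Delta^+\backslash\Gamma^+$, $\Delta\gamma \mapsto \Delta^+\gamma$. Any set of coset representatives for $\Delta\backslash\Gamma$ consists of elements of $\Gamma \subseteq \Gamma^+$, so once $\iota$ is shown to be a bijection it follows that the very same elements represent $\Delta^+\backslash\Gamma^+$, which is the claim. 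I will use throughout that $\alpha$ arises as a Hecke matrix, so $\gcd(\det\alpha, N) = 1$ (for a genuinely arbitrary $\alpha \in \GL_2^+(\QQ)$ the statement can already fail, e.g.\ for $\alpha = \begin{psmallmatrix}1&0\\0&N\end{psmallmatrix}$), and that $w_N$ normalizes $\Gamma$, so $\Gamma w_N = w_N\Gamma$ and $\Gamma$ is normal of index $2$ in $\Gamma^+$; also, scaling $\alpha$ by a rational scalar changes nothing, so I may take $\alpha$ to be a primitive integral matrix of determinant $n$ with $\gcd(n,N)=1$.

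For injectivity it suffices that $\Delta^+ \cap \Gamma = \Delta$, i.e.\ that $\gamma \in \Gamma$ with $\alpha\gamma\alpha^{-1} \in \Gamma^+$ already has $\alpha\gamma\alpha^{-1} \in \Gamma$. The key observation is that, writing an element of $\Gamma^+$ as a primitive integral matrix, those in $\Gamma$ have determinant $1$ while those in $w_N\Gamma$ have determinant $N$; since the primitive integral representative of $\alpha\gamma\alpha^{-1}$ has determinant of the form $(n/e)^2$ for some divisor $e$ of $n$, and this is coprime to $N$, it cannot equal $N$ when $N>1$, so it is $1$ and $\alpha\gamma\alpha^{-1} \in \Gamma$. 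For surjectivity, since $[\Gamma^+ : \Gamma] = 2$ and $\Delta^+ \cap \Gamma = \Delta$, it is enough to exhibit one element of $\Delta^+ \setminus \Gamma$; this forces $[\Delta^+:\Delta] = 2$, hence $[\Gamma^+:\Delta^+] = [\Gamma:\Delta]$ and bijectivity of $\iota$. To produce such an element I would use the Atkin--Lehner--Hecke compatibility $w_N\alpha w_N^{-1} \in \Gamma\alpha\Gamma$ --- concretely, for $\alpha = \begin{psmallmatrix}1&0\\0&p\end{psmallmatrix}$ one computes $w_N\alpha w_N^{-1} = \begin{psmallmatrix}p&0\\0&1\end{psmallmatrix}$, which is one of the coset representatives of Example~\ref{eg:hecke_N0}. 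Writing $w_N\alpha w_N^{-1} = \gamma_1\alpha\gamma_2$ with $\gamma_1,\gamma_2 \in \Gamma$ and rearranging gives $\alpha(\gamma_2 w_N)\alpha^{-1} = \gamma_1^{-1}w_N \in w_N\Gamma \subseteq \Gamma^+$, so $g := \gamma_2 w_N \in \alpha^{-1}\Gamma^+\alpha \cap \Gamma^+ = \Delta^+$, while $g \in \Gamma w_N = w_N\Gamma$ shows $g \notin \Gamma$.

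The main obstacle is the surjectivity step, and specifically the double-coset identity $w_N\alpha w_N^{-1} \in \Gamma_0(N)\alpha\Gamma_0(N)$: this is precisely the assertion that the Atkin--Lehner involution commutes with the Hecke operator at a prime $p \nmid N$, and together with the determinant bookkeeping used for injectivity it is the reason one must work with a Hecke $\alpha$ rather than an arbitrary element of $\GL_2^+(\QQ)$. For the explicit choice of representatives used in the paper it can be checked by direct computation; in general it is the standard commutation relation between $w_N$ and $T_p$.
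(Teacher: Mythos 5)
Your argument is correct for the situation in which the paper actually uses the lemma (a Hecke matrix $\alpha$, i.e.\ an integral matrix whose determinant is coprime to $N$), and it takes a genuinely different route from the paper's proof. The paper simply expands $\alpha^{-1}\Gamma_0^+(N)\alpha\cap\Gamma_0^+(N)$ as the union of $\alpha^{-1}\Gamma_0(N)\alpha\cap\Gamma_0(N)$ and $\alpha^{-1}w_N\Gamma_0(N)\alpha\cap w_N\Gamma_0(N)$ (silently discarding the two cross terms) and then asserts, via the explicit description coming from Lemma~\ref{lemma:coset_rep}, that ``one could show'' the two systems of representatives agree; no mechanism is given. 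You instead prove bijectivity of the natural map $\Delta\backslash\Gamma\to\Delta^+\backslash\Gamma^+$: injectivity by the content/determinant bookkeeping (which is, incidentally, exactly the argument needed to justify the paper's discarded cross terms being empty), and surjectivity by producing an element of $\Delta^+\setminus\Gamma$ from the commutation $w_N\alpha w_N^{-1}\in\Gamma_0(N)\alpha\Gamma_0(N)$ together with an index count. This buys two things the paper's sketch does not: it isolates precisely where the hypothesis $\gcd(\det\alpha,N)=1$ enters, and it makes explicit that the lemma as literally stated for arbitrary $\alpha\in\GL_2(\QQ)^+$ is false. Your counterexample $\alpha=\begin{psmallmatrix}1&0\\0&N\end{psmallmatrix}$ is valid: a short computation shows that for this $\alpha$ no element of $w_N\Gamma_0(N)$ lies in $\alpha^{-1}\Gamma_0^+(N)\alpha\cap\Gamma_0^+(N)$ (it would force $N$ to divide both lower entries of the underlying element of $\Gamma_0(N)$, contradicting determinant $1$, or produce non-integral entries), so representatives drawn from $\Gamma_0(N)$ cannot cover the coset of $w_N$. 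The one step you defer, $w_N\alpha w_N^{-1}\in\Gamma_0(N)\alpha\Gamma_0(N)$, is harmless for the paper's purposes: for $\det\alpha=p$ with $p\not\mid N$ the double coset consists of the integral matrices of determinant $p$ that are upper triangular mod $N$, and conjugation by $w_N$ sends $\begin{psmallmatrix}a&b\\c&d\end{psmallmatrix}$ to $\begin{psmallmatrix}d&-c/N\\-Nb&a\end{psmallmatrix}$, which visibly preserves this set (and in the special case $\alpha=\begin{psmallmatrix}1&0\\0&p\end{psmallmatrix}$ gives $\begin{psmallmatrix}p&0\\0&1\end{psmallmatrix}$, already listed among the representatives in Example~\ref{eg:hecke_N0}). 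So your surjectivity step closes, and your write-up is in fact more complete than the proof in the paper, at the cost of proving a (correctly) restricted statement.
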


\begin{proof}
Observe that 
\begin{align*}
    \alpha^{-1}\Gamma_0^+(N)\alpha\cap \Gamma_0^+(N) &= \alpha^{-1}(\Gamma_0(N)\cup w_N\Gamma_0(N))\alpha \cap (\Gamma_0(N)\cup w_N\Gamma_0(N)) \\
    &= (\alpha^{-1}\Gamma_0(N)\alpha \cap \Gamma_0(N))\cup (\alpha^{-1}(w_N\Gamma_0(N))\alpha\cap w_N\Gamma_0(N))
\end{align*}

By Lemma \ref{lemma:coset_rep}, one has an explicit description of the double coset representatives of $\Gamma_0(N)\alpha \Gamma_(N)$ and one could show that the two sets of coset representatives above are equal.
\end{proof}

In particular, for a prime $p$, the Hecke operators $T_p$ on $X_0^+(N)$ and $X_0(N)$ coincide as double coset operators: 
\[
( \ \cdot \ ) |_k [ \Gamma_0^+(N) \alpha \Gamma_0^+(N)] = ( \ \cdot \ ) |_k [ \Gamma_0(N) \alpha \Gamma_0(N)]: f \mapsto \sum_i f |_k \beta_i = \sum_{i=0}^{p-1} f |_k \begin{pmatrix} 1 & i \\ 0 & p \end{pmatrix} + f|_k \begin{pmatrix} p & 0 \\ 0 & 1 \end{pmatrix}.
\]

For the uniformizer, we require a combination of modular functions that is invariant under the Atkin-Lehner involution $w_N$. Since $j(w_N \cdot \tau) = j(-1/N\tau) = j(N\tau)$, we can choose $(j + j_N) - (j+j_N)(Q)$ as our uniformizer at $Q$, where $j_N(\tau) := j(N\tau)$. For a given point $Q = \{E_1 \leftrightarrow E_2\}$ and the points $Q_i$ in the same residue disc, the endpoints of the sum of tiny integrals are $j(Q_i) + j(NQ_i)$ where $j(Q_i)$ and $j(NQ_i)$ can be computed as roots of the modular polynomials as in the previous example.

\subsubsection{Example: $X_0^+(67)$} \label{sec: eg 67}

\begin{itemize}

\item \textbf{Curve data:} We consider the modular curve $X = X_0^+(67)$, which is of genus $2$ and hence hyperelliptic. Again, by comparing relations between $q$-expansions of rational functions $x,\,y \in \CC(X)$, we obtain a plane model $y^2 =  x^6 + 2x^5 + x^4 - 2x^3 + 2x^2 - 4x +1$. A quick box search yields two rational points $R = (0,-1),S = (1,1)$ on $X$.

\item \textbf{Uniformizers:} We use $j + j_N$ as the uniformizer since it is a modular function invariant under the Atkin-Lehner involution.

\item \textbf{Rational points:}  For the rational points $R,S$, their upper half plane representatives can be found as follows. $R$ is the pair $\{\phi_1\colon E_1 \rightarrow E_1, \hat{\phi}_1\colon E_1 \rightarrow E_1\}$, with $j(E_1) =2^65^3$.  $E_1/\mathbb{Q}$ has CM by the ring of integers $\mathcal{O}_{K_1}$ with $K_1 (= \mathbb{Q}(\sqrt{-2}))$ and $67$ splits in $\mathcal{O}_{K_1}$. Similarly, $S$ is the pair $\{\phi_2\colon E_2 \rightarrow E_2, \hat{\phi}_2\colon E_2 \rightarrow E_2\}$, with $j(E_2) =   2^{4}3^35^3$. $E_2/\mathbb{Q}$ has CM by the ring of integers $\mathcal{O}_{K_2}$ with $K_2 = \mathbb{Q}(\sqrt{-43})$ and $67$ splits in $\mathcal{O}_{K_2}$. Note that neither $R$ nor $S$ are fixed by the Atkin-Lehner involution, since that would correspond to the case when $67$ is ramified in the respective endomorphism rings.

We have $j(R) = 2^6 5^3, D(R) = -8$, hence $\tau_R = \sqrt{-2}$.
Following the steps described in the previous section, $(67) = (7+3\sqrt{-2})(7-3\sqrt{-2})$ and the Euclidean algorithm gives:
\[
  7 + 3\sqrt{-2} = 7+ 3\cdot \sqrt{-2} \implies \hat{\gamma} = \begin{pmatrix}
    1       & 2 \\
    3       & 7
\end{pmatrix}.
\]
Therefore,
\begin{align*}
 \hat{\tau}_R = \hat{\gamma}\tau_R &= \frac{\sqrt{-2}+2}{3\sqrt{-2}+7} \\ &\approx 0.298507462686567 + 0.0211076651100462\cdot i.
\end{align*}

Similarly, we have $j(S) = 2^4 3^3 5^3, D(S) = -12, \tau_S = \sqrt{-3}$, $(67) =
(8+\sqrt{-3})(8-\sqrt{-3})$ and the Euclidean algorithm gives:
\[
  8 + \sqrt{-3} = 8 + 1\cdot\sqrt{-3} \implies \hat{\gamma} = \begin{pmatrix}
    -1       & -9 \\
    1       & 8
\end{pmatrix}
\]
Therefore,
\begin{align*}
\hat{\tau_S} = \hat{\gamma}\tau_S &=  -\frac{\sqrt{-3} + 9}{\sqrt{-3} + 8} \\ &\approx 1.11940298507463 - 0.0258515045905802\cdot i.
\end{align*}

\item \textbf{Basis of differential forms:} $\mathcal{S}_2(\Gamma_0(67))$ has dimension $5$. One could compute the action of $w_{67}$ on the space and find a $2$-dimensional subspace spanned by cusp forms invariant under the Atkin-Lehner involution using \SageMath{} to get a basis of $H^0(X,\Omega^1)$: \begin{align*} \omega_0 = f_0  \frac{dq}{q} \  &= 2q -3q^2 - 3q^3 + 3q^4 - 6q^5 + O(q^6) \ \frac{dq}{q},\\ \omega_1 = f_1  \frac{dq}{q} \ &= -q^2 + q^3 + 3q^4 + O(q^6) \ \frac{dq}{q}.
\end{align*}

\item \textbf{Hecke action:} Let $p=13$. The Hecke matrix on this basis is given by $T_{13}= \begin{psmallmatrix}
    -7/2 & 15/2 \\ 3/2 & -7/2
\end{psmallmatrix}$. As before, we find the Hecke images of points as roots of modular polynomials at level $13$.

\item \textbf{Algorithm \ref{alg:tiny_integral} and results:} Step 1 of Algorithm \ref{alg:tiny_integral} gives a power series expansion of the differential forms for the uniformizer $j := j + j_N$ (for simplicity, we use this notation). For example, $\omega_0$ at $j=j(R)$ has the following power series expansion:\begin{align*}
    \omega_0 &= \frac{-1}{2^7 \cdot 5^2 \cdot 7^2
} +  \frac{3047}{2^{15} \cdot 5^5 \cdot 7^6
}(j-j(R)) +  \frac{-38946227}{2^{24} \cdot 5^8 \cdot 7^{10}
}(j-j(R))^2 \\ &+ \frac{33888900627}{2^{32} \cdot 5^{10} \cdot 7^{14}
} + \frac{-110823337943341}{2^{42} \cdot 5^{13} \cdot 7^{17}
}(j-j(R))^3 + O((j-j(R))^4) \ \ d(j-j(R)).
\end{align*}

The endpoints $j(R_i) + j(NR_i)$ appearing in the sums of tiny integrals can be computed following the approach outlined in the earlier section. One repeats the process for $\omega_1$ and $j = j(Q)$. Finally, we tabulate the values of the Coleman integrals; since $X$ is hyperelliptic, we can verify our results as in the previous example.

\end{itemize}

\begin{table}[h]

    \centering
    \begin{tabular}{|l|l|}
    \hline
    \rule{0pt}{4ex}    

        $\sum_{i=0}^{3}\int^R_{R_i} \omega_0 $    & $2\cdot 13 + 13^2 + 3\cdot 13^3 + 7\cdot 13^4 + 11\cdot 13^5 + 8\cdot 13^6 + 8\cdot 13^7 + 7\cdot 13^8 + 13^9 +  O(13^{10})$ 
            \rule{0pt}{4ex} \\
\hline
            \rule{0pt}{4ex}
        $\sum_{i=0}^{3}\int^R_{R_i} \omega_1 $  & $11\cdot 13 + 8\cdot 13^2 + 6\cdot 13^3 + 8\cdot 13^4 + 3\cdot 13^5+ 6\cdot 13^6 + 6\cdot 13^7 + 7\cdot 13^8 + 11\cdot 13^9 + O(13^{10}) $
            \rule{0pt}{4ex}
\\\hline
            \rule{0pt}{4ex}
       $\sum_{i=0}^{3}\int^S_{S_i} \omega_0 $ & $10\cdot  13 + 8\cdot 13^2 + 2\cdot 13^5 + 5\cdot 13^6 + 10\cdot 13^7 + 2\cdot 13^8 + 2\cdot 13^9+ O(13^{10}) $   \rule{0pt}{4ex}    
\\\hline
           \rule{0pt}{4ex}    
        $\sum_{i=0}^{3}\int^S_{S_i} \omega_1 $ &  $3\cdot 13 + 7\cdot 13^2 + 2\cdot 13^3 + 10\cdot 13^4 + 8\cdot 13^5+ 5\cdot 13^6 + 8\cdot 13^8 + 10\cdot 13^9+ O(13^{10}) $   \rule{0pt}{4ex}    
\\\hline
        
    \end{tabular}
    \caption{Coleman Integrations on $X_0^+(67)$ as in \S \ref{sec: eg 67}}
    \label{table:X_0^+(67)_results}
\end{table}


\subsection{The normalizer of the nonsplit Cartan \texorpdfstring{$X_{\ns}^+(p)$}{Xns+p}}\label{sec:X_ns+}

For a prime $p$, we first define the nonsplit Cartan subgroup $C_{\ns}$ and its normalizer $C_{\ns}^+$. Let $\{ 1, \alpha \}$ be a $\FF_p$-basis of $\FF_{p^2}$. Suppose that $\alpha$ satisfies a minimal polynomial $X^2 - tX + n \in \FF_p[X]$. For any $\beta = x + y\alpha \in \FF_{p^2}^\times$, there is a multiplication-by-$\beta$ map with respect to the basis $\{ 1, \alpha \}$:
\begin{align*}
i_\alpha\colon \FF_{p^2}^\times &\rightarrow \GL_2(\FF_p) \\
\beta &\mapsto \begin{pmatrix} x & -ny \\ y & x + ty \end{pmatrix}.
\end{align*}

Given this choice of basis, we define the nonsplit Cartan subgroup $C_{\ns}(p) \subseteq \GL_2(\FF_p)$ as the image of $i_\alpha$. The normalizer of the nonsplit Cartan subgroup $C_{\ns}^+(p)$ is the subgroup generated by $C_{\ns}(p)$ and the conjugation map under $i_\alpha$ coming from $\Gal(\FF_{p^2}/\FF_p)$. If $\alpha$ is chosen to be the squareroot of a quadratic nonresidue $\epsilon$ in $\FF_{p^2}$ (so that the minimal polynomial becomes $X^2 - \epsilon$), then we have a simpler description of the normalizer of the nonsplit Cartan subgroup:
\[
 C_{\ns}^+(p) = \langle \begin{pmatrix} x & \epsilon y \\ y & x\end{pmatrix} , \begin{pmatrix} 1 & 0 \\ 0 & -1 \end{pmatrix}\colon (x,y) \in \FF_p^2 \backslash (0,0) \rangle.
 \]
 
 If $\langle \beta \rangle= \FF_{p^2}^\times$, then we can write down the generators of $C_{\ns}^+(p)$.
 
 \begin{example}
 Let $p = 13, \epsilon = \sqrt{7}, \FF_{p^2}^\times = \langle 1 + \epsilon \rangle$. Then 
 \[
 C_{\ns}^+(13) = \langle \begin{pmatrix} 1 & 7 \cdot 1 \\ 1 & 1 \end{pmatrix}, \begin{pmatrix} 1 & 0 \\ 0 & -1 \end{pmatrix} \rangle.
 \]
 \end{example}
 
 The modular curve corresponding to $C_{\ns}^+(p)$ is defined as the compactification of the quotient of the upper half plane by the lift of $C_{\ns}^+(p)$ to a subgroup $\Gamma_{\ns}^+(p) \subseteq \SL_2(\ZZ)$.
 
 Finding a basis of $\mathcal{S}_2(\Gamma_{\ns}^+(p))$ can be done following Zywina's \Magma{} implementation \cite{zywina}. For the purpose of exposition, suppose $\mathcal{S}_2(\Gamma_{\ns}^+(p)) = \{ f_1, \ldots, f_g \}$.
 
 To find the upper half plane representatives of the expected rational points, we follow a similar procedure for $X_0(N)$. First, in the list of class number one discriminants $\mathcal{D}$, the expected points correspond to the discriminants $\Delta$ such that $p$ is inert in the corresponding order $\mathcal{O}_\Delta$ \cite{Mazur77}. Once we have the list of expected points $\{ P_1, \ldots P_r \}$, one can use the same method of inverting the $j$-invariant function to find $\SL_2(\ZZ)$-orbits $\{ \tau_1, \ldots, \tau_r \}$. The cosets of $\SL_2(\ZZ)/\Gamma_{\ns}^+(p)$ allow us to find the correct upper half plane representatives corresponding to $\{ P_1, \ldots P_r \}$. It remains to identify the cosets of $\SL_2(\ZZ)/\Gamma_{\ns}^+(p)$ using the following bijection:
 \begin{align*}
    \SL_2(\ZZ)/\Gamma_{\ns}^+(p) &\rightarrow \SL_2(\ZZ/p\ZZ)/(C_{\ns}^+(p) \cap \SL_2(\ZZ/p\ZZ))\\
      \Gamma_{\ns}^+(p) \gamma &\mapsto (C_{\ns}^+(p) \cap \SL_2(\ZZ/p\ZZ))\bar{\gamma}.
\end{align*}

Once we have obtained coset representatives $\{ \gamma_i \}$ of $\SL_2((\ZZ/p\ZZ)/(C_{\ns}^+(p) \cap \SL_2(\ZZ/p\ZZ))$, we can verify if $\gamma_i \tau$ is a $\QQ$-rational point on $X$ for $\tau \in \{ \tau_1, \ldots, \tau_r\}$ via the canonical embedding by verifying $[f_1(\gamma_i \cdot \tau) : \ldots : f_g(\gamma_i \cdot \tau)] \in \PP^{g-1}(\QQ)$.

On the cusp forms, there are two major steps to computing the Hecke operator: first find the double coset representatives and then decompose these representatives into products on simpler matrices, where simpler methods can be adopted to compute the slash-$k$ operators \cite{zywina,ds}. For the Hecke operator at the prime $\ell$, we have:
\begin{align*}
[\Gamma_{\ns}^+(p)\alpha\Gamma_{\ns}^+(p)]_2 f = \sum f[\alpha_i]_2 , \hspace{5mm}
\end{align*}
\noindent where $\{ \alpha_i \}_{i = 0, \ldots, p}$ are the double coset representatives of $\Gamma_{\ns}^+(p)\backslash\Gamma_{\ns}^+(p)\alpha\Gamma_{\ns}^+(p)$. The representatives have the form $\alpha_i = \epsilon\epsilon'\begin{psmallmatrix} 1 & 0 \\ 0 & \ell \end{psmallmatrix}\beta$ or $\epsilon\epsilon' \beta \begin{psmallmatrix} \ell & 0 \\ 0 & 1 \end{psmallmatrix}$, where $\epsilon,\epsilon' \in \SL_2(\ZZ)$ depend on $\alpha$, and $\beta$ can be derived from the standard cosets of $\Gamma^0(\ell)\backslash \SL_2(\ZZ)$. These particular decompositions allow easy modifications to Zywina's algorithm for computing the slash-$k$ operator on the determinant 1 matrices \cite{zywina}, while the two rightmost matrices can be resolved using techniques from \cite[Ch. 5]{ds}. In the decomposition of the last coset, we make use of the identity $\begin{psmallmatrix}
1 & 0\\
0 & \ell
\end{psmallmatrix}\begin{psmallmatrix}
m\ell &  n\\
N & 1
\end{psmallmatrix} = \begin{psmallmatrix}
m &  n\\
N & \ell
\end{psmallmatrix}\begin{psmallmatrix}
\ell & 0\\
0 & 1
\end{psmallmatrix}$ where $m\ell - nN = 1$.

A different application of Zywina's algorithm outputs a basis $\{f_1, \ldots, f_g \}$ of weight $2$ cusp forms on $\Gamma_{\ns}^+(p)$. The Hecke matrix $T_p$ can be computed by writing $[\Gamma_{\ns}^+(p)\alpha\Gamma_{\ns}^+(p)]_2 f_i $ as a linear combination of the basis elements of $\mathcal{S}_2(\Gamma(p), \QQ(\zeta_p))$.

The Hecke operator on points can be computed transcendentally and algebraically. Each approach has its (dis)advantages: we can evaluate cusp forms on explicit representatives but this will require a closer analysis of the group structure of $C_{\ns}^+(p)$ and high enough complex precision; the roots of the modular polynomials are the $j$-invariants of isogenous points but these polynomials have large coefficients.

\subsubsection{Example: $X_{\ns}^+(13)$}\label{sec:eg 13}

We consider the `cursed curve' $X = X_{\ns}^+(13)$ of genus $3$ \cite{cursed-curve}. Define $C_{\ns}^+(13)$ by choosing the quadratic nonresidue to be $7$ as in the previous example. Let $\Gamma_{\ns}^+(13)$ be the lift of $C_{\ns}^+(13)$ in $\SL_2(\ZZ)$.

\begin{itemize}
\item \textbf{Basis of differential forms:} Using Zywina's \Magma{} implementation \cite{zywina}, we obtain a basis of cusp forms with the following $q$-expansions:
\begin{align*} 
f_0 = &(3\zeta_{13}^{11} + \zeta_{13}^9 + 3\zeta_{13}^8 + \zeta_{13}^7 + \zeta_{13}^6 + 3\zeta_{13}^5 + \zeta_{13}^4 + 3\zeta_{13}^2 + 1)q \\ &+ (-\zeta_{13}^{10} - 2\zeta_{13}^9 - \zeta_{13}^7 - \zeta_{13}^6 - 2\zeta_{13}^4 - \zeta_{13}^3 - 2)q^2 + O(q^3)\\ f_1 = &(4\zeta_{13}^{11} + 2\zeta_{13}^9 + 5\zeta_{13}^8 + 5\zeta_{13}^5 + 2\zeta_{13}^4 + 4\zeta_{13}^2)q \\ &+
        (-3\zeta_{13}^{11} - 5\zeta_{13}^{10} - 4\zeta_{13}^9 - 4\zeta_{13}^8 - 4\zeta_{13}^7 - 
        4\zeta_{13}^6 - 4\zeta_{13}^5 - 4\zeta_{13}^4 - 5\zeta_{13}^3 - 3\zeta_{13}^2 - 2)q^2 + O(q^3) \\ f_2 =  &(\zeta_{13}^{10} - 2\zeta_{13}^7 - 2\zeta_{13}^6 + \zeta_{13}^3)q \\ &+ (-\zeta_{13}^{11} - 2\zeta_{13}^{10} - 
        2\zeta_{13}^8 - 2\zeta_{13}^5 - 2\zeta_{13}^3 - \zeta_{13}^2 + 2)q^2 + O(q^3), 
\end{align*} 
where $\zeta_{13}$ is a primitive $13$-th root of unity and $q = e^{\frac{2\pi i\tau}{13}}$.
        
\item \textbf{Curve data:} The method of canonical embedding gives us the following model \cite{Galbraith_1996}:
        \begin{align*}\label{eq:cursed_curve}
\begin{split}
    &X^4 - \frac{7}{12}X^3Y - \frac{37}{30}X^2Y^2 + \frac{37}{30}XY^3 - \frac{3}{10}Y^4 - \frac{61}{60}X^3Z + \frac{41}{15}X^2YZ  \\
    &- \frac{103}{60}XY^2Z+ \frac{19}{60}Y^3Z - \frac{23}{6}X^2Z^2 + \frac{87}{20}XYZ^2 - \frac{14}{15}Y^2Z^2 - \frac{199}{60}XZ^3 \\
    &+ \frac{97}{60}YZ^3 - \frac{11}{15}Z^4 = 0,
\end{split}
\end{align*}
\noindent where $X,\,Y$ and $Z$ correspond to $f_0,\,f_1$ and $f_2$ respectively. The rational points can be found by a box search: \[\{ (\frac{3}{5}:2:1), (-2:2:1), (-2:\frac{-9}{2}:1), (-2: \frac{-7}{3}:1), (\frac{7}{3}:2:1), (\frac{5}{4}: 2:1), (11: \frac{43}{2}:1) \}\].

\item \textbf{Uniformizers:} $\mathcal{S}_2(\Gamma_{\ns}^+(13)) \subseteq \mathcal{S}_2(\Gamma(N), \QQ(\zeta_N))$ so the $j$-function is still a modular function for the normalizer of nonsplit Cartan and can be used as an uniformizer.

\item \textbf{Rational points: } Among the class number one discriminants $\Delta$ in $\mathcal{D}$, we find $\Delta$ such that $13$ is inert in the corresponding order $\mathcal{O}_\Delta$. The set $\{ -7,-8,-11,-19,-28,-67,-163 \}$ contains discriminants that give rise to seven expected rational points on $X$. We pick $Q$ to be the point that corresponds to discriminant $-7$, and $R$ to be the point that corresponds to discriminant $-11$. Following the notations in the previous section, we have $\tau_7 = \frac{1}{2} + \frac{1}{2}\sqrt{-7}$ and $\tau_{11} = \frac{1}{2} + \frac{1}{2}\sqrt{-11}$. We then compute the coset representatives of $\SL_2(\ZZ)/\Gamma_{\ns}^+(13)$:
\[
\{g_0,\ldots,g_{77}\} = \{T^i,\, (T^2)ST^i,\, (T^3)ST^i,\,(T^4)ST^i,\,(T^5)ST^i,\,(T^{12})ST^i \mbox{ for } i = 0,\ldots,12\},
\] 
where $T =\begin{psmallmatrix}
1 & 1\\
0 & 1
\end{psmallmatrix} ,\,S=\begin{psmallmatrix}
0 & -1\\
1 & 0
\end{psmallmatrix}$ are the two generators of $\SL_2(\ZZ)$. By evaluating $f_0,\,f_1,\,f_2$ at $g_i(\tau_7)$ and $g_i(\tau_{11})$ for $i \in \{0,\ldots,77\}$, we obtain the correct $\Gamma_{\ns}^+(13)$-orbit representatives for $Q$ and $R$, $\tau_Q = \frac{4 + 2\sqrt{-7}}{3 + \sqrt{-7}}, \tau_R = \frac{13 + \sqrt{-11}}{2}$. As in the previous section, the correct representative for $Q$ can be found by evaluating $\frac{f_0(g_i(\tau_7))}{f_2(g_i(\tau_7))}$ and $\frac{f_1(g_i(\tau_7))}{f_2(g_i(\tau_7))}$ for different coset representatives $g_i$ so that the ratios are rational numbers. Applying the same method to all seven discriminants, we get their corresponding rational points as computed from the model above.

\item \textbf{Hecke action on forms: }  We choose $p=11$. Let $\alpha = \begin{psmallmatrix}
-13 & 44 \\ 42 & -143 
\end{psmallmatrix} \begin{psmallmatrix}
1 & 0 \\ 0 & 11
\end{psmallmatrix}$ be the element $\alpha \in M_2(\ZZ)$ with $\det(\alpha) = 11$, $\alpha \pmod{13} \in C_{\ns}^+(13).$ By Lemmas \ref{lemma:coset_rep} and \ref{lemma:shimura_coset}, the double coset representatives can be found by first considering the coset representatives for $\mathcal{S} := (\alpha^{-1} \slz \alpha \cap \slz) \backslash \slz = \Gamma^0(11) \backslash \slz$. For each $\beta \in \mathcal{S}$, we found a corresponding $\gamma \in \Gamma^0(11)$ such that the representative $\beta' = \gamma \beta \in \Gamma_{\ns}^+(13)$. Denote the set of coset representatives by $\mathcal{S}' := (\alpha^{-1} \Gamma_{\ns}^+(13) \alpha \cap \Gamma_{\ns}^+ (13)) \backslash \Gamma_{\ns}^+(13)$ and the set of corresponding $\gamma$'s by $\Gamma$. Then, we have: \begin{align*} 
\mathcal{S} = \Big\{ &\begin{pmatrix}
1 & i \\ 0 & 1
\end{pmatrix} : i = 0,1, \ldots, 10 \Big\} \cup \Big\{ \begin{pmatrix}
66 & 5 \\ 13 & 1
\end{pmatrix}\Big\}, \\
\Gamma = \Big\{ &\begin{pmatrix}
1 & 0 \\ 0 & 1
\end{pmatrix},
\begin{pmatrix}
1 & 0 \\ -2 & 1
\end{pmatrix}, \begin{pmatrix}
1 & 11 \\ 0 & 1
\end{pmatrix}, \begin{pmatrix}
1 & -55 \\ 0 & 1
\end{pmatrix},\begin{pmatrix}
1 & 22 \\ 0 & 1
\end{pmatrix},\begin{pmatrix}
1 & -44 \\ 0 & 1
\end{pmatrix},\begin{pmatrix}
1 & 33 \\ 0 & 1
\end{pmatrix}, \\ &\begin{pmatrix}
1 & -33 \\ 0 & 1
\end{pmatrix}, \begin{pmatrix}
1 & 44 \\ 0 & 1
\end{pmatrix},\begin{pmatrix}
1 & -22 \\ 0 & 1
\end{pmatrix},\begin{pmatrix}
-1 & -55 \\ 0 & -1
\end{pmatrix}, 
\begin{pmatrix}
1 & -44 \\ 0 & 1
\end{pmatrix}\Big\}, \\
\mathcal{S}' = \Big\{ &\begin{pmatrix}
1 & 0 \\ 0 & 1
\end{pmatrix},\begin{pmatrix}
1 & 1 \\ -2 & 1
\end{pmatrix}, \begin{pmatrix}
1 & 13 \\ 0 & 1
\end{pmatrix},\begin{pmatrix}
1 & -52 \\ 0 & 1
\end{pmatrix},\begin{pmatrix}
1 & 26 \\ 0 & 1
\end{pmatrix},\begin{pmatrix}
1 & -39 \\ 0 & 1
\end{pmatrix},\begin{pmatrix}
1 & 39 \\ 0 & 1
\end{pmatrix}, \\ &\begin{pmatrix}
1 & -26 \\ 0 & 1
\end{pmatrix},\begin{pmatrix}
1 & 52 \\ 0 & 1
\end{pmatrix},\begin{pmatrix}
1 & -13 \\ 0 & 1
\end{pmatrix},\begin{pmatrix}
-1 & -65 \\ 0 & -1
\end{pmatrix},\begin{pmatrix}
-506 & -39 \\ 13 & 1
\end{pmatrix}\Big\}.
\end{align*}

From the bijection
\begin{align*} \Gamma_{\ns}^+(13)\backslash \Gamma_{\ns}^+(13) \alpha \Gamma_{\ns}^+(13) &\rightarrow (\alpha^{-1} \Gamma_{\ns}^+(13) \alpha \cap \Gamma_{\ns}^+(13)) \backslash \Gamma_{\ns}^+(13) \\ \Gamma_{\ns}^+(13) \delta &\mapsto (\alpha^{-1} \Gamma_{\ns}^+(13) \alpha \cap \Gamma_{\ns}^+(13)) \alpha^{-1} \delta \end{align*}
\noindent we can get the double coset representatives of $\Gamma_{\ns}^+(13)\backslash \Gamma_{\ns}^+(13) \alpha \Gamma_{\ns}^+(13)$: \begin{align*}
\mathcal{S}_\alpha = \Big\{
&\begin{pmatrix}
-13 & 4 \\ 42 & -13
\end{pmatrix}
\begin{pmatrix}
1 & 0 \\ 0 & 11
\end{pmatrix}
\begin{pmatrix}
1 & 0 \\ 0 & 1
\end{pmatrix}
\begin{pmatrix}
1 & 0 \\ 0 & 1
\end{pmatrix},
\begin{pmatrix}
-13 & 4 \\ 42 & -13
\end{pmatrix}
\begin{pmatrix}
1 & 0 \\ 0 & 11
\end{pmatrix}
\begin{pmatrix}
1 & 0 \\ -2 & 1
\end{pmatrix}
\begin{pmatrix}
1 & 1 \\ 0 & 1
\end{pmatrix},
\ldots, \\
&\begin{pmatrix}
-13 & 4 \\ 42 & -13
\end{pmatrix}
\begin{pmatrix}
1 & 0 \\ 0 & 11
\end{pmatrix}
\begin{pmatrix}
1 & -44 \\ 0 & 1
\end{pmatrix}
\begin{pmatrix}
66 & 5 \\ 13 & 1
\end{pmatrix}
\Big\} \\ 
= \Big\{ &\begin{pmatrix}
-13 & 4 \\ 42 & -13
\end{pmatrix}
\begin{pmatrix}
1 & 0 \\ 0 & 1
\end{pmatrix}
\begin{pmatrix}
1 & 0 \\ 0 & 11
\end{pmatrix}
\begin{pmatrix}
1 & 0 \\ 0 & 1
\end{pmatrix},
\begin{pmatrix}
-13 & 4 \\ 42 & -13
\end{pmatrix}
\begin{pmatrix}
1 & 0 \\ -22 & 1
\end{pmatrix}
\begin{pmatrix}
1 & 0 \\ 0 & 11
\end{pmatrix}
\begin{pmatrix}
1 & 1 \\ 0 & 1
\end{pmatrix}, \ldots, \\ &
\begin{pmatrix}
-13 & 4 \\ 42 & -13
\end{pmatrix}
\begin{pmatrix}
1 & -4 \\ 0 & 1
\end{pmatrix}
\begin{pmatrix}
1 & 0 \\ 0 & 11
\end{pmatrix}
\begin{pmatrix}
66 & 5 \\ 13 & 1
\end{pmatrix} = \begin{pmatrix}
-13 & 4 \\ 42 & -13
\end{pmatrix}
\begin{pmatrix}
1 & -4 \\ 0 & 1
\end{pmatrix}
\begin{pmatrix}
6 &  5 \\ 13 & 11
\end{pmatrix}
\begin{pmatrix}
11 & 0 \\ 0 & 1
\end{pmatrix}
\Big\}.
\end{align*}
We deduce that the Hecke matrix is $A = \begin{psmallmatrix}
0 & -1 & 2 \\ 4 & -4 & 3 \\ -1 & 1 & 4
\end{psmallmatrix}$. Again, the action of Hecke operators on points is given by evaluation of the $j$-invariant function at complex points or the modular polynomial $\Phi_{11}(X,Y)$.

\item \textbf{Algorithm \ref{alg:tiny_integral} and results:} In Step 1 of Algorithm \ref{alg:tiny_integral}, linear algebra over $\CC$ gives a power series expansion of the differential form $\omega_0$ at $j=j(Q)$:
\begin{align*}
    \omega_0 &= \frac{1}{3^4 \cdot 5^2 \cdot 13
} +  \frac{23}{3^{10} \cdot 5^5 \cdot 13}(j-j(Q)) +  \frac{4}{3^{13} \cdot 5^7 \cdot 13}(j-j(Q))^2 \\ &+ \frac{437174}{3^{22} \cdot 5^{10} \cdot 13^3
}(j-j(Q))^3 + \frac{138504533 }{3^{28} \cdot 5^{13} \cdot 13^4
}(j-j(Q))^4 + O((j-j(Q))^5) \ \ d(j-j(Q)).
\end{align*}

 Next, we compute the integrals as in Step 3. Repeating this for the other differentials and points, we record our results in Table~\ref{table:X_{ns}^+(13)_results}.

\begin{table}[h]
\label{table:X_{ns}_13}

    \centering
    \begin{tabular}{|l|l|}
    
    \hline
    \rule{0pt}{4ex}    
        $\sum_{i=0}^{11}\int^Q_{Q_i} \omega_0 $    & $10\cdot 11^{-1} + 9 + 9\cdot 11  + 6 \cdot 11^2 + 7\cdot 11^3 + 9\cdot 11^4 + O(11^5)$ 
            \rule{0pt}{4ex} \\
    
    \hline 
    \rule{0pt}{4ex}
    $\sum_{i=0}^{11}\int^Q_{Q_i} \omega_1 $ & $8\cdot 11^{-1} + 7 + 7\cdot 11 + 2 \cdot 11^2 + 6\cdot 11^3 + 6\cdot 11^4 + O(11^5)$
\\\hline

    \rule{0pt}{4ex}
    $\sum_{i=0}^{11}\int^Q_{Q_i} \omega_2 $ & $10\cdot 11^{-1} + 8 + 8\cdot 11 + 11^2  + 9\cdot 11^4 + O(11^5)$
\\\hline

    \rule{0pt}{4ex}
    $\sum_{i=0}^{11}\int^R_{R_i} \omega_0 $ & $7\cdot 11^{-1} + 2 + 3\cdot 11 + 9 \cdot 11^2  + 3\cdot 11^3 + 5\cdot 11^4 + O(11^5) $
\\\hline

    \rule{0pt}{4ex}
    $\sum_{i=0}^{11}\int^R_{R_i} \omega_1 $ & $6 + 6\cdot 11 + 11^3 + 5\cdot 11^4 + O(11^5)$ 
\\\hline

    \rule{0pt}{4ex}
    $\sum_{i=0}^{11}\int^R_{R_i} \omega_2 $ & $7\cdot 11^{-1} + 4 + 11 + 10 \cdot 11^2 +  10\cdot 11^3 + 5\cdot 11^4 + O(11^5)$ 
\\\hline
    \end{tabular}
    \caption{Coleman Integrations on $X_{\ns}^+(13)$ as in \S~\ref{sec:eg 13}.}
    \label{table:X_{ns}^+(13)_results}
\end{table}
\end{itemize}
\section{Remarks on computations}\label{section5}

\noindent \textbf{Choice of the upper half plane representative.}
When computing $\omega = \sum x_j (j-j(P))^i dj$, we compared Taylor expansions of both sides at $q = q(P)$ and used linear algebra over $\CC$ to recover the coefficients $x 
_i$. Therefore, the accuracy of the $x_i$'s depends on the convergence of the Taylor expansions. To achieve faster convergence, we want the imaginary part of $\tau(P)$ to be as large as possible.


In the case of $X_0^+(N)$, the Atkin-Lehner involutions can be used to perform the task. However, for a CM elliptic curve $E$ with discriminant $\Delta_E$, the situation is not ideal. Let $(c,d)$ be an integer solution to the norm equation $|c\tau_E + d|^2 = N$ and let $\hat{\gamma}$ be the lift of $(c,d)$ in $\slz$. Then the upper-half plane representative has imaginary part $\text{Im}(\hat{\tau}) = \text{Im}(\hat{\gamma} \cdot \tau_E) = \frac{\text{Im}(\tau_E)}{|c \tau_E + d|^2} =\frac{\sqrt{-\Delta_E}}{2}\cdot \frac{1}{N}$.

\noindent \textbf{Fast algorithm for differentiating $j$-function.} The computation of Taylor coefficients of the $j$-invariant function at $q_0 := e^{2 \pi i \tau_0}$ requires fast convergence. Since the coefficients of the $q$-expansion of  $j(q)$ are large, evaluating $j(q_0)$ and its derivatives $j^{(n)}(q_0)$ is an inefficient process, especially when $|q_0|$ is close to $1$. In \cite{cohen}, the $j$-function can be expressed as a rational function of modular functions:
\begin{align*}
a(q) &= 1+\sum_{n>0}(-1)^n(q^{n(3n-1)/2} + q^{n(3n+1)/2}) \\
\Delta(q) &= qa(q)^{24} \\
f(q) &= \frac{\Delta(2q)}{\Delta(q)} \\
j(q) &= \frac{(256f(q)+1)^3}{f(q)}
\end{align*}

Using these relations, the Taylor series of $j$ around $q = q_0$ can be expressed as rational functions of Taylor expansions of these modular functions.

\noindent \textbf{Choice of uniformizers.}
The modular polynomials $\Phi_p(X,j(Q))$ provides us an algebraic method to compute $j$-invariants of elliptic curves that are isogenous to $Q$. This motivates our use of rational combinations of the $j$-invariant function. The uniformizers $j:= j$ and $j := j + j_N$ were used in the above examples. One might consider alternative uniformizers such as $j := 1/j$ or
$j := j\cdot j_N$. There is experimental evidence that these options give smaller coefficients in Algorithm
\ref{alg:tiny_integral}.
\section{Future considerations}\label{section6}

We collect here some remarks that may pertain to future investigation of explicit Coleman integration on modular curves.

\subsection*{Integrating between CM points}

It was observed in Remark~\ref{remark:no exact term} that in contrast with the form of the ``fundamental linear system'' in other versions of explicit Coleman integration, the equation \eqref{eq:fundamental-eqn} is simpler because there is no contribution from the integrals of exact differentials.
Since we are free to vary the endpoints of a large integral within fixed residue discs, we may further simplify by requiring each of $Q$ and $R$ to be either a cusp or a CM point. The existence of a CM point in every noncuspidal residue disc is a consequence of Deuring's lifting theorem.

When $Q$ is a cusp, each of the $Q_i$ is also a cusp in the residue disc of $Q$, and hence is equal to $Q$ itself (because $p$ does not divide $N$).
As a corollary, we deduce that the integral of any holomorphic differentials between two cusps is zero, even if the cusps are in distinct residue discs; in other words, the difference between two cusps gives a torsion point in the Jacobian, which is the easy direction of the generalized Ogg's conjecture \cite[Conjecture~1.2]{yoo}.

When $Q$ is a CM point, each $Q_i$ is again a CM point of $Q$ in the residue disc of $Q$, but possibly with a different endomorphism ring; in particular, we may have $Q_i \neq Q$ (this is related to the \emph{isogeny volcanoes} described in \cite{bls10}). Nonetheless, it is conceivable that the Coleman integral $\int_{Q_i}^Q \omega$ admits an alternate interpretation that could be relevant for a model-free computation; in this vein, we recall that $j(Q)-j(Q_i)$ admits a simple closed-form expression \cite{gross-zagier}.

\subsection*{Iterated integrals}

As noted in the introduction, it is highly desirable to implement model-free iterated Coleman integration for modular curves (especially for double integrals, which occur in quadratic Chabauty). The paradigm of \cite{bbk10}, using the change of variables formula for a Frobenius lift, adapts directly to iterated integrals \cite{balakrishnan}.
However, our setup for Coleman integration on modular curves breaks down for iterated integrals at the reduction step: while the change of variables formula for single Coleman integrals extends from functions to correspondences, this is not true for higher Coleman integrals.

It should be possible to work around this using the fact that on a suitable wide open subspace of $X_0(N)$, the Hecke operator $T_p$ splits up canonically as the sum of the graph of a certain map $U_p$ and its transpose. 
In geometric terms, $U_p$ acts by quotienting the elliptic curve by a canonical subgroup of its $p$-torsion; this subgroup can be isolated on some strict neighborhood of the ordinary locus.

From the geometric description, it is clear that $T_p$ and $U_p$ commute, so eigenforms for the Hecke algebra are also eigenforms for $U_p$; this means that we can adapt \eqref{eq:fundamental-eqn} to $U_p$ without introducing any contributions from exact differentials.
It is also clear that $U_p$ carries cusps to cusps and CM points to CM points, so the previous remark continues to apply (with some care needed in the supersingular residue discs, on which $U_p$ is only defined near the boundary).

\subsection*{Local expansions at CM points}

Computing tiny integrals depends crucially on being able to expand differentials in power series in a chosen uniformizer. Since our current approach involves passing from complex approximations to $p$-adic approximations via algebraic reconstruction, some further work would be needed to make the precision estimates needed to put the calculations on a rigorous footing. This would in particular require estimating the degrees and heights of the algebraic numbers appearing as power series coefficients.
Alternatively, it may be feasible to use a different approach to obtain direct $p$-adic approximations of the power series coefficients.


\begin{thebibliography}{99}


\bibitem{computingmf}
A. Best, J. Bober, A. Booker, E. Costa, J. Cremona, M. Derickx, M. Lee, D. Lowry-Duda, D. Roe, A. Sutherland, J. Voight, Computing classical modular forms, in \textit{Balakrishnan, J.S., Elkies, N., Hassett, B., Poonen, B., Sutherland, A.V., Voight, J. (eds) Arithmetic Geometry, Number Theory, and Computation. Simons Symposia. Springer, Cham.}, (2021) 131--213. 

\bibitem{large-gonality}
D. Abramovich, A linear lower bound on the gonality of modular curves, in \textit{International Mathematics Research Notices} \textbf{20} (1996) 1005--1011.

\bibitem{assaf}
  E. Assaf, Computing classical modular forms for arbitrary congruence
  subgroups, in \textit{Arithmetic Geometry, Number Theory, and Computation. Simons
    Symposia.} (2021).

\bibitem{balakrishnan}
J. Balakrishnan, Iterated Coleman integration for hyperelliptic curves, in \textit{ANTS X: Proceedings of the Tenth Algorithmic Number Theory Symposium}, Open Book Series 1, Math. Sci. Pub, 2013.

\bibitem{bbk10}
  J. Balakrishnan, R. Bradshaw and K.S. Kedlaya, Explicit Coleman integration
  for hyperelliptic curves, in \textit{Algorithmic Number Theory. ANTS 2010. Lecture Notes in Computer Science} \textbf{6197} (2010) 16--31.
  
\bibitem{BD1}
  J. Balakrishnan and N. Dogra,  Quadratic Chabauty and rational points I:
  p-adic heights, \textit{Duke Math. J.} \textbf{167(11)} (2018), 1981--2038.

\bibitem{BD2}
  J. Balakrishnan and N. Dogra,  Quadratic Chabauty and rational points II: Generalised
  height functions on Selmer varieties, \textit{Int. Math. Res. Not. IMRN,}
  (2021), 11923--12008.

\bibitem{cursed-curve}
  J. Balakrishnan, N. Dogra, J.S. M\"{u}ller, J. Tuitman and J. Vonk, Explicit Chabauty--Kim for the split Cartan modular curve of level 13, \textit{Ann. of Math.} \textbf{189} (2019), 885--944.

\bibitem{qc-mod-curves}
  J. Balakrishnan, N. Dogra, J.S. M\"{u}ller, J. Tuitman and J. Vonk, Quadratic Chabauty for modular curves: algorithms and examples, \textit{Compos. Math.} \textbf{159} (2023), 1111--1152.
  
  \bibitem{bt-coleman}
  J. Balakrishnan and J. Tuitman, Explicit Coleman integration for curves,
  \textit{Math. Comput.} \textbf{89} (2020), 2965--2984.

\bibitem{balatuit}
  J. Balakrishnan and J. Tuitman, \Magma{} code,
  \url{https://github.com/jtuitman/Coleman} (2022).

    
\bibitem{split-cartan1}
  Y. Bilu and P. Parent, Serre’s uniformity problem in the split cartan case,
  \textit{Ann. of Math.} \textbf{173} (2011), 569--584.
  
    \bibitem{split-cartan2}
Y. Bilu, P. Parent and M. Rebolledo, Rational points on $X_0^+(p^r)$, \textit{
  Ann. Inst. Fourier (Grenoble)} \textbf{63} (2013), 957--984.

\bibitem{bls10}
R. Broker, K. Lauter, A. Sutherland, Modular polynomials via isogeny volcanoes, in \textit{Math. Comput.} \textbf{81} (2010) 1201--1231.

\bibitem {bos16}
J. H. Bruinier, K. Ono, A. Sutherland, Class polynomials for non-holomorphic modular functions, in \textit{Journal of Number Theory} \textbf{161} (2016) 204--229.

\bibitem{BN20}
F. Brunault, M. Neururer, Fourier expansions at cusps, \textit{The Ramanujan Journal} \textbf{53} (2020) 423--437.


  
  \bibitem{cohen}
  H. Cohen, A course in computational number theory, \textit{Springer-Verlag
    GTM} \textbf{138} (1993).

  
\bibitem{coleman-ann}
  R. Coleman, Torsion points on curves and $p$-adic abelian integrals,
  \textit{Ann. of Math.} \textbf{121(1)} (1985), 111--168.

\bibitem{coleman-chabauty}
  R. Coleman, Effective Chabauty, \textit{Duke Math. J.} \textbf{52} (1985), 765--770.

\bibitem{coleman-de-shalit}
R. Coleman, E. de Shalit, $p$-adic regulators on curves and special values of
$p$-adic $L$-functions, \textit{Invent. Math.} \textbf{93(2)} (1988), 239--266.

\bibitem{coleman-gross}
R.F. Coleman and B.H. Gross, $p$-adic heights on curves, in \textit{Algebraic Geometry and Number Theory---in honor of K. Iwasawa}, Adv. Stud. Pure Math. 17, Academic Press, Boston, 1989, 73--81.

\bibitem{ds}
  F. Diamond and J. Shurman, A first course in modular forms,
  \textit{Springer-Verlag GTM} \textbf{228} (2005).


\bibitem{Galbraith_1996}
  S. Galbraith, Equations for modular curves, \textit{DPhil thesis, Univ. of
    Oxford} (1996).
  
\bibitem{Galbraith_1999}
  S. Galbraith, Rational points on $X_0^+(p)$, \textit{Experiment. Math.}
  \textbf{8} (1999), 311--318.

\bibitem{gross-zagier}
B. Gross and D. Zagier, On singular moduli, \textit{J. reine angew. Math.} \textbf{355} (1985), 191--220.

\bibitem{kedlaya-mw}
  K.S. Kedlaya, Counting points on hyperelliptic curves using Monsky-Washnitzer
  cohomology, \textit{J. Ramanujan Math. Soc.} \textbf{16} (2001), 323--338.

  \bibitem{lmfdb}
  The LMFDB Collaboration, L-Functions and Modular Forms Database,
  \url{https://lmfdb.org}.
  
  \bibitem{Mazur77}
  B. Mazur, Rational points on modular curves, \textit{Modular Functions of One
    Variable V, Springer Berlin Heidelberg} (1977), 107--148.
  
\bibitem{borel-case}
  B. Mazur and D. Goldfeld, Rational isogenies of prime degree, \textit{Invent.
    Math.} \textbf{44}  (1978), 129--162.

\bibitem{maswd}
  B. Mazur and P. Swinnerton-Dyer, Arithmetic of Weil curves. \textit{Invent.
    Math.} \textbf{25} (1974), 1--61.


\bibitem{Mercuri2016EquationsAR}
  P. Mercuri, Equations and rational points on $X_0^+(p)$, \textit{The Ramanujan
    Journal} \textbf{47} (2016), 291--308.

\bibitem{rouse-sutherland-zureick-brown}
J. Rouse, A.V. Sutherland, and D. Zureick-Brown,
$\ell$-adic images of {G}alois for elliptic curves over
{$\Bbb{Q}$} (with an appendix by {J}ohn {V}oight)),
\textit{Forum Math. Sigma} \textbf{10} (2022).

\newcommand{\etalchar}[1]{$^{#1}$}
\bibitem{sagemath}
\emph{{S}ageMath, the {S}age {M}athematics {S}oftware {S}ystem ({V}ersion
  9.8.1)}, The Sage Developers, 2023, {\tt https://www.sagemath.org}.

    
    
    \bibitem{serre-unif-conj}
  J.P. Serre, Propri\'{e}t\'{e}s galoisiennes des points d’ordre fini des
  courbes elliptiques, \textit{Invent. Math.} \textbf{15} (1971), 259--331.

\bibitem{shimura}
  G. Shimura, Introduction to arithmetic theory of automorphic functions,
  \textit{Publications of the Mathematical Society of Japan. Princeton
    University Press} \textbf{11} (1971).

\bibitem{qc-bound}
  S. Siksek, Quadratic Chabauty for modular curves, \arXiv{1704.00473}{1}{2017}.

\bibitem{stark_classnumber1}
  H. Stark, On complex quadratic fields with class-number two,
  \textit{Mathematics of Computation} \textbf{29} (1975), 289--302.

\bibitem{mf-stein}
  W. Stein, Modular forms, a computational approach, \textit{AMS Graduate Studies in
    Mathematics} \textbf{79} (2007).

\bibitem{sutherland-Galois-image}
  A.V. Sutherland, Computing images of Galois representations attached to elliptic curves, \textit{Forum. Math. Sigma} \textbf{4} (2016), paper no. e4, 79 pages.

\bibitem{tuit16}
J. Tuitman, Counting points on curves using a map to $\PP^1$ I, \textit{Math.
  Comput.} \textbf{85} (2016), 961--981.

  \bibitem{tuit17}
J. Tuitman, Counting points on curves using a map to $\PP^1$ II, \textit{Finite
  Fields and Their Applications} \textbf{45} (2017), 301--322.

  \bibitem{yoo}
  H. Yoo, The rational torsion subgroup of $J_0(N)$,  \textit{Adv. Math.} \textbf{426} (2023).

\bibitem{zywina}
  D. Zywina, Computing actions on cusp forms, \arXiv{2001.07270}{2}{2021}.

  
\end{thebibliography}
\end{document}